\def\scr#1{\mathscr{#1}}
\newcommand{\et}{\quad\text{and}\quad}
\newcommand{\R}{\mathds R}
\newcommand{\I}{\mathds 1}
\def\d{{\rm d}}
\def\<{\langle}
\def\>{\rangle}
\def\wt{\widetilde}
\def\ol{\overline}
 \def\tt{\tilde}
 \def\ss{\sqrt}
\def\R{\mathbb R}   \def\ss{\sqrt} 
  \def\vv{\varepsilon} 
\def\<{\langle} \def\>{\rangle}  
  \def\nn{\nabla}  
\def\d{\text{\rm{d}}}   
  \def\si{\sigma} 
 \def\beq{\begin{equation}}  
\def\e{\text{\rm{e}}}  \def\OO{\Omega}  
 \def\tt{\tilde} 
 \def\P{\mathbb P}
\def\E{\mathbb E}
\def\to{\rightarrow}
\def\8{\infty}\def\3{\triangle}
\def\W{\mathbb{W}}\def\1{\lesssim}
\renewcommand{\bar}{\overline}
\renewcommand{\hat}{\widehat}
\renewcommand{\tilde}{\widetilde}
\newtheorem{theorem}{Theorem}[section]
\newtheorem{lemma}[theorem]{Lemma}
\newtheorem{proposition}[theorem]{Proposition}
\theoremstyle{definition}
\newtheorem{example}[theorem]{Example}
\newtheorem{remark}[theorem]{Remark}
\numberwithin{equation}{section}
\begin{document}
\allowdisplaybreaks

\title[Conditional McKean-Vlasov jump diffusions] {Ergodicity of conditional McKean-Vlasov jump diffusions}

\author{Jianhai Bao \qquad
Yao Liu\qquad
Jian Wang}
\date{}

\thanks{\emph{J.\ Bao:} Center for Applied Mathematics, Tianjin University, 300072  Tianjin, P.R. China. \url{jianhaibao@tju.edu.cn}}

\thanks{\emph{Y.\ Liu:} School  of Mathematics and Statistics, Fujian Normal University, 350007 Fuzhou, P.R. China. \url{liuyaomath@163.com}}

\thanks{\emph{J.\ Wang:}
School  of Mathematics and Statistics \& Key Laboratory of Analytical Mathematics and Applications (Ministry of Education) \& Fujian Provincial Key Laboratory
of Statistics and Artificial Intelligence, Fujian Normal University, 350007 Fuzhou, P.R. China. \url{jianwang@fjnu.edu.cn}}

\maketitle

\begin{abstract}
In this paper, we are interested in   conditional McKean-Vlasov jump diffusions, which are also termed as  McKean-Vlasov stochastic differential equations with jump idiosyncratic  noise and jump
common noise. As far as conditional McKean-Vlasov jump diffusions are concerned,  the corresponding conditional distribution flow is a measure-valued process, which indeed  satisfies a stochastic partial integral differential equation driven by a Poisson random measure.
Via a novel  construction of the  asymptotic coupling by reflection,  we explore  the ergodicity  of the underlying measure-valued process  corresponding to  a
  one-dimensional conditional McKean-Vlasov jump diffusion when the associated drift term fulfils a partially dissipative condition with respect to the spatial variable. In addition, the theory  derived demonstrates that the intensity of
 the jump common noise and the jump idiosyncratic noise can simultaneously
 enhance
  the convergence rate of the exponential ergodicity.

\medskip

\noindent\textbf{Keywords:} Conditional McKean-Vlasov jump diffusion; L\'evy noise;  exponential ergodicity; asymptotic coupling by reflection

\smallskip

\noindent \textbf{MSC 2020:} 60G51, 60J25, 60J76.
\end{abstract}
\section{Introduction and main result}
\subsection{Background}

When the coefficients of an SDE under consideration depend not only on the state of the solution but also the   law of the solution itself, it is
referred to as a distribution-dependent SDE \cite{WR}. In
the literature, the distribution-dependent SDE is also termed as a  mean-field SDE \cite{CD1} or a McKean-Vlasov SDE  in honor of the mean-field concept in kinetic theory \cite{Kac} due to Vlasov and establishing an SDE framework which links particle systems to nonlinear diffusions \cite{MH1,Sznitman}. In the past few decades, McKean-Vlasov SDEs have been applied considerably \cite{CD1} in statistical physics, mean-field games, finance, and collective behavior modeling, to name just a few. In contrast to the classical SDEs,  due to the nonlinear dependence on the measure variables,   some challenges need to be  surmounted in order  to tackle the finite-time behavior  and the long-time asymptotics  for McKean-Vlasov SDEs. In particular,
 the issues on
strong/weak well-posedness,
stochastic
numerics, propagation of chaos (PoC for short), ergodicity as well as existence and uniqueness of stationary distributions
have advanced greatly; see, for instance, \cite{BLW,CD1,Cava,dES,GLM,LMW,Schuh,Wanga}.

Admittedly, a McKean-Vlasov SDE  builds a bridge between microscopic interactions and macroscopic phenomena. Nevertheless, the McKean-Vlasov SDE is incompetent to depict the systemic randomness (which influences   all particles concurrently) in an interconnected system. In turn,
the McKean-Vlasov SDE with common noise plays a proper role in modelling  a complex system, which  enjoys an emergent structure and is subject to shared shocks. In  terminology, the McKean-Vlasov SDE with common noise is also called the conditional McKean-Vlasov SDE; see, for example,  \cite{BLM,Huang,LSZ,Pham,STW}. The distinctions between standard McKean-Vlasov SDEs and conditional McKean-Vlasov SDEs lie  in  measure dependence (deterministic vs stochastic), particle independence (independent vs ‌conditionally independent at infinity), and nonlinear Fokker-Planck equations (PDE vs SPDE), and so forth. Regarding conditional McKean-Vlasov SDEs,
the   discrepancies mentioned previously might (partially)  lead  to invalidity of the existing methods dealing with standard McKean-Vlasov SDEs.
With  wide applications in e.g.  mean-field games  with partial information \cite{CD2}, nonlinear filtering problems, stochastic control with partial observation and  mean-field interactions, systemic risk modeling in finance \cite{BLY}, conditional McKean-Vlasov SDEs driven by Brownian motions
have been explored in depth upon ergodicity \cite{BW,DTM,Maillet}, well-posedness \cite{BLM,HSSb,KNRS},  conditional  PoC  \cite{ELL,Huang,STW}, to name just a few.

A bank run (or run on the bank) \cite{BTV} takes place when numerous clients withdraw concurrently cash from deposit accounts with a financial institution
because they believe that the financial institution  might be  insolvent. In this case, it is rational to introduce a jump process
to portray   sudden and significant withdrawals. Based on this point of view,   the bank's reserve process can be modelled by a jump diffusion.  Additionally, when the macro-economy suffer from a severe instability,
the phenomenon on  bank runs is contagious, which  leads to the  occurrence of the banking panic \cite{BTV} (i.e., a financial crisis that occurs when many banks suffer runs at the same time). The  observation above demonstrates that the bank's reserves are influenced by a system-wide randomness (e.g., macroeconomic shocks) affecting all agents. The aforementioned insights motivate us to study  conditional McKean-Vlasov jump-diffusions \cite{BWWY,BWWY2}.

To proceed,  we introduce  the underlying probability space we are going to work  on as well as  some notation.
Let $(\OO^1, \mathscr F^1, (\mathscr F_t^1)_{t\ge0}, \P^1)$ and $(\OO^0, \mathscr F^0,(\mathscr F_t^0)_{t\ge0}, \P^0)$ be complete filtered  probability spaces, on which L\'evy processes  $(Z_t)_{t\ge0}$ and   $(Z^0_t)_{t\ge0}$, involved in \eqref{EW0} below,     are  respectively supported. Throughout this paper,  we shall  work on the product  probability space $(\OO, \mathscr F, \mathbb F,\P)$,
where $\OO:=\OO^0\times \OO^1$, $(\mathscr F, \P)$ is the completion of $(\mathscr F^0\otimes \mathscr F^1, \P^0\otimes \P^1)$,  and $\mathbb F$ is the complete  and right-continuous augmentation of $(\mathscr F^0_t\otimes \mathscr F^1_t)_{t\ge0}$. $\scr{P}(\R^d )$ stands for  the family of probability measures on $\R^d $.

In this work,  we focus  on  the following conditional McKean-Vlasov SDE in $\R^d$:
\begin{align}\label{EW0}
\d X_t=b(X_t,\mu_t)\,\d t   +\si\,\d Z_t   +\si_0\,\d Z^0_t,
\end{align}
where $b:\R^d \times\scr{P}(\R^d )\to\R^d $, $\si,\si_0\in\R$,
$(Z_t)_{t\ge0}$ and $(Z^0_t)_{t\ge0}$ are independent $d$-dimensional rotationally  invariant pure jump L\'evy processes, and $\mu_t:=\scr L_{X_t|\scr F_t^0}$. In \eqref{EW0},
$(Z_t)_{t\ge0}$ and $(Z^0_t)_{t\ge0}$ are called the idiosyncratic noise (e.g. bank-specific defaults) and the common noise  (e.g., market-wide shocks), respectively. Throughout the paper, we assume that the respective L\'evy measures $\nu$ and $\nu^0$ associated with $(Z_t)_{t\ge0}$ and $(Z^0_t)_{t\ge0}$ fulfil
 the following integrability conditions:
\begin{equation}\label{vv0}
\int_{\R^d}(|z|\wedge|z|^2)\,\nu(\d z)<\8 \et \int_{\R^d}(|z|\wedge|z|^2)\,\nu^0(\d z)<\8.
\end{equation}

So far, concerning conditional McKean-Vlasov jump-diffusions,
great progress has been made on e.g. well-posedness,  deep learning, optimal stopping, optimal/impulse control, conditional PoC,  stochastic maximum principles;
see e.g. \cite{AR,AO,AO2,BLW,BWWY,BWWY2,HR,HR2} for related details.
However, the exploration on long-time behavior of conditional McKean-Vlasov jump-diffusions
is rare. As shown in Proposition \ref{pro-} below, the conditional distribution flow $(\mu_t)_{t\ge0}$ associated with \eqref{EW0}
solves a stochastic Fokker-Planck equation (SFPE for short), which indeed is a stochastic partial integral differential equation driven by a Poisson random measure.
In the present work,  we   move forward and fill particularly  a gap in investigating the exponential ergodicity of the infinite-dimensional  measure-valued process $(\mu_t)_{t\ge0}$ in lieu of the finite-dimensional process $(X_t)_{t\ge0}$ determined by \eqref{EW0}.

Due to the technical reason, which will be dwelled on in Remark \ref{remark} below, we are confined to
the conditional McKean-Vlasov jump diffusion \eqref{EW0} in $\R$ to state the reasonable hypotheses and the subsequent main result.

\subsection{Main result}
We   assume that
\begin{enumerate}\it
\item[{\rm(${\bf H}_1$)}]
$b(\cdot,\delta_{0})$ is continuous
on $\R$, and
there exist constants $\lambda_1,\lambda_2,\lambda_3>0$ and $\ell_0\ge1$ such that for all $x,y\in\R $ and $\mu,\bar{\mu}\in\mathscr P_1(\R)$,
\begin{equation}\label{H1}
(x-y) (b(x,\mu)-b(y,\mu)) \le (\lambda_1+\lambda_2)| x-y |^2\I_{\{| x-y |\le\ell_0\}} -\lambda_2| x-y |^2,
\end{equation}
and
\begin{align}\label{H2}
|b(x,\mu)-b(x,\bar{\mu})|\le \lambda_3\mathbb W_1(\mu,\bar{\mu}).
\end{align}

\item[{\rm(${\bf H}_2$)}]
 for any conditionally independent and identically distributed   $(X^i_t)_{1\le i\le n}$ under the filtration $\mathscr F_t^{0}$,
 there exists a function $\varphi:[0,\8)\to[0,\8)$ satisfying  $\lim_{r\to\8}\varphi(r)=0$ such that for any $n\ge1,$
\begin{align}\label{H3}
 \max_{1\le i\le n}\sup_{t\ge0}\E|b(X^i_t,\mu_t^i)-b(X^i_t,\tilde \mu^{n,i}_t)| \le \varphi(n),
\end{align}
 where $\mu_t^i:=\mathscr L_{X_t^i|\mathscr F_t^{0}}$ and $\tilde \mu^{n,-i}_t
 :=\frac{1}{n-1}\sum_{j=1,j\neq i}^n\delta_{X^j_t}$.

\item[{\rm(${\bf H}_3$)}]
there exists a function $F_{\si,\si_0}:[0,\infty)\rightarrow[0,\infty)$ such that
\begin{align}\label{EEE}
  F_{\si,\si_0}(r)
   \le \si^2\int_{\{|z|<\frac{1}{2|\si|}r\}}|z|^2\nu(\d z)+\si^2_0\int_{\{|z|<\frac{1}{2|\si_0
   }r\}}|z|^2\nu^0(\d z),\quad r\in[0,2\ell_0],
 \end{align}
and
 $[0,\8)\ni r\mapsto g_*(r):=\lambda_1\int_0^r\frac{s}{F_{\si,\si_0}(s)}\,\d s
 <\infty$
  satisfies that $g_*''(r)\le0$, $g_*^{(3)}(r)\ge0$ and $g_*^{(4)}(r)\le0$ for all $r\in(0,2\ell_0]$.
\end{enumerate}

\ \

Below, we make some comments on Assumptions (${\bf H}_1$),
(${\bf H}_2$) and (${\bf H}_3$).
\begin{remark}
 \eqref{H1} and  \eqref{H2}  indicate respectively that  $b$ is spatially dissipative in long distance, and
  uniformly (in the state variable) continuous in the measure variable under the $L^1$-Wasserstein distance. Under (${\bf H}_1$), via  the fixed point theorem, the SDE \eqref{EW0} admits a unique strong solution even for the multidimensional setting (i.e., $d\ge2$);  see, for instance,  \cite[Theorem 2.1]{BLW} under the weak  monotonicity and  the weak coercivity.
(${\bf H}_1$), besides  (${\bf H}_2$),  enables us to derive  the asymptotic  PoC
in an infinite-time horizon
(see Proposition \ref{pro3} below for more details). Additionally, some  sufficiencies  are furnished in
 \cite[Lemma 4.1]{BW} for the validity of (${\bf H}_2$).
There are a number of examples on $F_{\si,\si_0}$ satisfying (${\bf H}_3$); see, for instance, Example \ref{exa} below for a concrete one.
\end{remark}

Before the presentation of the main result, it
further
necessitates  to introduce some  notation. For a Polish space $(E,\|\cdot\|_E)$,   $\mathscr P(E)$ means  the set of probability measures on $E$ and write $\mathscr P_1(E)$ as
$$\mathscr P_1(E)=\big\{\mu\in\mathscr P(E): \mu(\|\cdot\|_E)<\8\big\}.$$
Set
\begin{align*}
L_1(\mathscr P(\R^d)): =\bigg\{\mu\in\mathscr P(\mathscr P(\R^d)):\int_{\mathscr P(\R^d)}\nu(|\cdot| )\,\mu(\d\nu)<\8\bigg\}
\end{align*}
and define the $L^1$-Wasserstein distance on $L_1(\mathscr P(\R^d))$ as below:
\begin{align*}
\mathcal W_1(\mu_1,\mu_2) =\inf_{\pi\in\mathscr C(\mu_1,\mu_2)}\int_{\mathscr P(\R^d)\times\mathscr P(\R^d)}\mathbb W_1(\tilde {\mu}_1,\tilde{\mu}_2)\,\pi(\d\tilde{\mu}_1,\d\tilde{\mu}_2),\quad \mu_1,\mu_2\in L_1(\mathscr P(\R^d)),
\end{align*}
where $\mathscr C(\mu_1,\mu_2)$ means  the family  of couplings of $\mu_1,\mu_2$, and
$\mathbb W_1$ embodies the
$L^1$-Wasserstein distance, which is defined as follows:
$$
\W_1(\mu_1,\mu_2):=\inf_{\pi\in\mathscr{C}(\mu_1,\mu_2)}\left(\int_{\R^d\times\R^d}|x-y|\,\pi(\d x,\d y)\right),\quad \mu_1,\mu_2\in\mathscr P_1(\R^d).
$$

The main result in the present work is stated as below, which demonstrates that the measure-valued process $(\mu_t)_{t\ge0}$ is weakly contractive under the $L^1$-Wasserstein distance $\mathcal W_1$.

\begin{theorem}\label{thm1}
Assume that $({\bf H }_1)$,
$({\bf H}_2)$ and $({\bf H }_3)$ hold and suppose $\si,\si_0\neq0$.
Then,
there exist    constants $C,\lambda_0^*,\lambda_3^*>0$  satisfying that  for  any  $t\ge0$   and $\lambda_3\in[0,\lambda_3^*]$,
\begin{equation}\label{EE1}
\mathcal W_1(\mathscr L_{\mu_t},\mathscr L_{\bar{\mu}_t})\le C\e^{-\lambda_0^* t}
\mathcal W_1(\mathscr L_{\mu_0},\mathscr L_{\bar{\mu}_0}),
\end{equation}
where  $\mu_t: =\mathscr L_{X_t|\mathscr F_t^{0}}$ and   $\bar{\mu}_t: =\mathscr L_{X_t|\mathscr F_t^{0}}$ stands for  the regular
conditional distributions of $X_t$, determined by the conditional McKean-Vlasov SDE   \eqref{EW0} in $\R$,
 with   initial distributions
$\mathscr L_{\mu_0}$ and $\mathscr L_{\bar{\mu}_0}$, respectively;
   $\lambda_3>0$ is the Lipschitz constant, given in \eqref{H2},  of $b(x,\mu)$ in  the measure variable.
\end{theorem}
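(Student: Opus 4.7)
The plan is to establish \eqref{EE1} by first reducing the contraction on conditional laws $\mathcal W_1(\mathscr L_{\mu_t},\mathscr L_{\bar\mu_t})$ to a one-marginal estimate via a finite particle approximation, and then constructing an \emph{asymptotic coupling by reflection} tailored to the conditional McKean--Vlasov jump setting. For each $n\ge 1$ I would introduce two $n$-particle systems $(X^i_t)_{1\le i\le n}$ and $(\bar X^i_t)_{1\le i\le n}$ which are, within each system, conditionally i.i.d.\ copies of $X$ (resp.\ $\bar X$) given the common-noise filtration, driven by a shared common noise and independent idiosyncratic L\'evy noises. By the asymptotic conditional propagation of chaos (Proposition~\ref{pro3}), it suffices to estimate the one-marginal quantity $\E|X^1_t-\bar X^1_t|$ up to an error $\varphi(n)\to 0$, since this quantity controls $\mathcal W_1(\mathscr L_{\mu_t},\mathscr L_{\bar\mu_t})$.

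The second step is the coupling construction. Within each $n$-particle system, particles share the common noise $Z^0$ (resp.\ $\bar Z^0$); however, between the two systems I would couple both the common noises $Z^0,\bar Z^0$ and the idiosyncratic noises $Z^i,\bar Z^i$ by reflection. In one dimension, using the rotational invariance of $\nu$ and $\nu^0$, reflection reduces to flipping the sign of any jump of magnitude below a suitable threshold whenever the current displacement is small, and using a synchronous coupling for large jumps; the reflected processes retain the correct marginal L\'evy laws, so the joint construction is a valid coupling. Because the common noise is reflected \emph{between} the two systems (not within a given one), both $\nu$ and $\nu^0$ contribute to the contraction mechanism, which explains the aggregation of $\si,\si_0,\nu,\nu^0$ inside $F_{\si,\si_0}$ in $({\bf H}_3)$.

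The core analysis then applies the It\^o--L\'evy formula to $f(r^i_t)$, where $r^i_t:=|X^i_t-\bar X^i_t|$ and $f\in C^1([0,\8))$ is concave, strictly increasing, linear on $[2\ell_0,\8)$, and designed on $[0,2\ell_0]$ so that the jump It\^o term generated by the reflected Poisson random measure dominates the non-dissipative drift contribution $(\lambda_1+\lambda_2)r$ on $\{r\le\ell_0\}$, while on $\{r>\ell_0\}$ one exploits the strict dissipativity $-\lambda_2 r$ from $({\bf H}_1)$. The auxiliary function $g_*$ in $({\bf H}_3)$ encodes precisely this ODE comparison, and the conditions $g_*''\le 0$, $g_*^{(3)}\ge 0$, $g_*^{(4)}\le 0$ are sharp so that the two-point second difference generated by the coupled Poisson random measure (with aggregated intensity $F_{\si,\si_0}$) produces a strict spectral gap on $[0,2\ell_0]$. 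Combined with $({\bf H}_1)$, this yields, for every $i$, an inequality of the form
\begin{equation*}
\frac{\d}{\d t}\E f(r^i_t)\le -\lambda_0\,\E f(r^i_t)+\lambda_3\,\E\,\W_1(\mu^i_t,\bar\mu^i_t),
\end{equation*}
for some constant $\lambda_0>0$ depending on $F_{\si,\si_0}$, $\lambda_1,\lambda_2,\ell_0$ and the structure of $f$.

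Finally, using $({\bf H}_2)$ and Proposition~\ref{pro3}, the measure-term $\W_1(\mu^i_t,\bar\mu^i_t)$ can be replaced, up to an error $\varphi(n)\to 0$, by the empirical quantity $\ff{1}{n}\sum_{j}|X^j_t-\bar X^j_t|$. Averaging over $i$, using the two-sided bound $c_1 r\le f(r)\le c_2 r$ furnished by the concavity and eventual linearity of $f$, and choosing $\lambda_3^*>0$ small enough that $c_2\lambda_3^*<\lambda_0$, a Gr\"onwall argument yields exponential decay of $\ff{1}{n}\sum_i\E f(r^i_t)$ with rate $\lambda_0^*:=\lambda_0-c_2\lambda_3^*$, up to an error vanishing with $n$. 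Letting $n\to\8$ and invoking Proposition~\ref{pro3} once more produces \eqref{EE1}. I expect the main technical obstacle to be precisely in the coupling step: engineering a joint reflection of the two common noises that is compatible with the within-system conditional independence of particles, and verifying that the higher-order derivative conditions on $g_*$ in $({\bf H}_3)$ are exactly what is needed to make the jump It\^o term overcome the non-dissipative drift \emph{uniformly} on $[0,2\ell_0]$ when both $\nu$ and $\nu^0$ are simultaneously active.
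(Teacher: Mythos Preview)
Your proposal captures the analytic heart of the argument correctly: the asymptotic reflection coupling acting simultaneously on the idiosyncratic and the common jumps, the concave auxiliary function $f$ built from $g_*$ via $({\bf H}_3)$, and closing the Gr\"onwall loop when $\lambda_3$ is small. The gap is in the particle-system architecture.

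You propose to couple two \emph{non-interacting} $n$-particle systems $(X^i_t)$ and $(\bar X^i_t)$ directly. In that scheme the drift difference produces the term $\lambda_3\,\W_1(\mu_t,\bar\mu_t)$, and you claim this can be replaced by $\tfrac1n\sum_j|X^j_t-\bar X^j_t|$ via $({\bf H}_2)$. But $({\bf H}_2)$ requires the family to be conditionally i.i.d.\ given $\mathscr F^0_t$, and once the noises are reflected in a \emph{state-dependent} way (the reflection switches on $\|{\bf Z}^{n,\vv}_t\|_1$), the second family $(\bar X^{i,\vv}_t)_i$ is neither conditionally independent nor does any individual component have conditional law $\bar\mu_t$ for $\vv>0$. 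The replacement step therefore fails, and the loop does not close. Your repeated invocation of Proposition~\ref{pro3} as a reduction device is also a mis-citation: that proposition \emph{is} the core coupling estimate, not a PoC reduction.

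The paper circumvents exactly this issue by taking the second component of the coupling to be the \emph{interacting} particle system \eqref{EW2} started from $(\bar X^i_0)$. Since its drift uses its own empirical measure $\hat\mu^{n,\vv}_t$, the measure term becomes $\lambda_3\,\W_1(\tilde\mu^n_t,\hat\mu^{n,\vv}_t)\le\lambda_3\|{\bf Z}^{n,\vv}_t\|_1$, which closes Gr\"onwall without any conditional-i.i.d.\ hypothesis on the coupled side; $({\bf H}_2)$ is invoked only on the genuine conditionally i.i.d.\ first system to pass from $\mu^i_t$ to $\tilde\mu^{n,-i}_t$ (this is Proposition~\ref{pro3}). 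The proof of Theorem~\ref{thm1} then uses a four-term triangle inequality $\mu_t\to\tilde\mu^n_t\to\hat\mu^n_t\to\bar\mu^n_t\to\bar\mu_t$: the outer three legs vanish as $n\to\infty$ by the \emph{finite-time} conditional PoC (Proposition~\ref{pro1}), and only the middle leg $\tilde\mu^n_t\leftrightarrow\hat\mu^n_t$ carries the exponential contraction from Proposition~\ref{pro3} after passing to the weak limit in $\vv$ via Proposition~\ref{pro2}. The interacting system is thus not an optional simplification but the device that makes the martingale-problem identification (Proposition~\ref{pro2}) and the closure of the Gr\"onwall inequality possible simultaneously.
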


\begin{remark}
 By invoking the weak contraction \eqref{EE1} and applying the Banach fixed point theorem, the measure-valued process $(\mu_t)_{t\ge0}$ associated with the conditional McKean-Vlasov jump diffusion  \eqref{EW0} in $\R$
 has a unique
invariant probability measure (which is also called a stationary distribution)
 provided that the mean-field interaction is not too strong (i.e., $\lambda_3>0$ in \eqref{H2} is small enough).

For classical McKean-Vlasov SDEs without common noise, the study of ergodicity is explored by means of the corresponding decoupled SDEs  as shown in \cite{Eberle,LMW,Wanga}. Nonetheless, as far as conditional McKean-Vlasov SDEs are concerned,  the routine taken in \cite{Eberle,LMW,Wanga}  is no longer workable. In turn, inspired by \cite{BW,Maillet}, we appeal to the associated stochastic interacting particle system to tackle the ergodicity of the measure-valued process $(\mu_t)_{t\ge0}$ associated with \eqref{EW0} in $\R$.
\end{remark}

In comparison with the existing literature \cite{BW,Maillet}, the innovation of the present paper lies in the following two aspects.
\begin{remark}
 (1) {\it Framework.} In contrast to \cite{BW,Maillet}, the driven noises involved in this paper  are totally different. In detail,
in \cite{BW,Maillet} the  idiosyncratic  noise and the
common noise are independent Brownian motions. In this context, the conditional distribution flow satisfies an  SFPE, which in fact  is a stochastic partial differential equation driven by Brownian motion. Concerning the conditional McKean-Vlasov jump diffusion \eqref{EW0} in $\R$, the underlying idiosyncratic  noise and the
common noise are jump processes. Correspondingly, the conditional distribution flow also fulfils an SFPE, which nevertheless is a stochastic partial {\it integral} equation driven by a {\it Poisson random measure}.

(2) {\it Coupling construction.} Regarding the work \cite{Maillet}, the reflection coupling and the synchronous coupling  were  applied respectively  to the common noise and
the idiosyncratic noise.  As for \cite{BW}, the reflection coupling was employed to not only the common noise but  also the
 idiosyncratic noise whereas, with regarding  to the multiplicative noise, the  synchronous coupling was adopted. With the aid of a well-chosen  threshold, the whole jump size is divided into two parts, where one part is the so-called small-size part and the other one is the big-size part.
When the associated jump size is located in the small-size zone, the asymptotic coupling by reflection  is explored. On the contrary,
 the  synchronous coupling  is taken into account.
\end{remark}

 In the past few years, since the seminal work  \cite{LW}, the ergodicity of (McKean-Vlasov) SDEs driven by non-symmetric L\'{e}vy processes
 has been investigated considerably; see, for instance, \cite{LMW} and references within. Whereas, in the present work,
 the establishment of our main result (i.e., Theorem \ref{thm1})  is assumed that
 both the jump  idiosyncratic  noise and the jump common noise possess the  rotationally invariant property, which plays an important role in constructing the asymptotic coupling by reflection; see in particular the proof of Proposition \ref{pro2} for related details. As a continuation of the present work, it is quite natural to seek an extension to the case that   the    idiosyncratic  noise and the   common noise are
  non-symmetric L\'evy noises. Concerning such setting,  the construction of the underlying  coupling might  be fundamentally different and more intricate. This is left to explore  in our future work.

\ \

The rest of this paper is arranged as follows. In Section \ref{sec2}, we (i) show that the conditional distribution flow solves an SFPE driven by a Poisson random measure,  (ii) reveal  that the conditional distribution flow associated with the stochastic non-interacting particle system
keep untouch with respect to the particle index,  (iii) establish
 the conditional PoC in a finite-time horizon, as well as (iv) construct an asymptotic  coupling process
for the associated stochastic non-interacting particle system and the stochastic  interacting particle system.  Section \ref{sec3} is devoted to the proof of  Theorem \ref{thm1}, which is treated on account of
 the uniform-in-time conditional PoC  for the conditional McKean-Vlasov jump diffusion \eqref{EW0} in $\R$.

\section{Preliminaries}\label{sec2}
In this section, for the conditional McKean-Vlasov jump diffusion \eqref{EW0} in $\R^d$ (rather than $\R$),
we set up  a   series of preparatory work, which lays the foundation  of the proof for Theorem \ref{thm1}.
Roughly speaking, in Subsection \ref{subsec1}, we show that the measure-valued process $(\mu_t)_{t\ge0}$ solves a stochastic partial integral equation driven by a Poisson random measure. In addition, we demonstrate that the corresponding $((\mu_t^i)_{t\ge0})_{ 1\le i\le n}$
coincide almost surely with $(\mu_t)_{t\ge0}$ when, in the stochastic non-interacting particle system,  the  idiosyncratic noise $(Z_t)_{t\ge0}$  is replaced by   i.i.d. copies $((Z_t^i)_{t\ge0})_{ 1\le i\le n} $ whereas the common noise $(Z_t^0)_{t\ge0}$ is kept untouch.  Our goal in the other subsections  is twofold, where the former  one is to investigate the conditional PoC in finite time, and the latter one  is to construct the  so-called  asymptotic coupling by reflection.

Throughout this section,  we always suppose that
\begin{enumerate} \it
\item[{\rm(${\bf A}_1$)}] $b(\cdot,\delta_{\bf0}):\R^d  \to\R^d $  is continuous on $\R^d $, and  there exist     constants  $L_1,L_2 >0$ such that for all $x,y\in\R^d $ and $\mu,\bar{\mu}\in\mathscr P_1(\R^d)$,
\begin{equation}\label{E1}
\<x-y, b(x,\mu)-b(y,\mu)\>\le L_1|x-y|^2,
\end{equation}
and
\begin{align}\label{E2}
|b(x,\mu)-b(x,\bar{\mu})|\le L_2\mathbb W_1(\mu,\bar{\mu}).
\end{align}
\end{enumerate}

It is easy to see that  Assumption (${\bf A}_1$) implies that  for all $x,y\in\R^d $ and $\mu,\bar{\mu}\in\mathscr P_1(\R^d)$,
\begin{equation}\label{E3}
\<x-y, b(x,\mu)-b(y,\bar{\mu})\>\le (L_1\vee L_2)\big(|x-y|+\mathbb W_1(\mu,\bar{\mu})\big)|x-y|.
\end{equation}
Then,
the SDE \eqref{EW0}   has a unique
strong solution; see e.g. \cite[Theorem 4.1]{BLW} for related details.

In \eqref{EW0}, if $(Z_t)_{t\ge0}$ is replaced by i.i.d. copies $((Z_t^i)_{t\ge0})_{1\le i\le n}$,  supported on  $(\OO^1, \mathscr F^1, (\mathscr F^1_t)_{t\ge0},\P^1)$,   the following non-interacting particle system:
\begin{equation}\label{EW1}
\d X_t^i= b(X_t^i,\mu_t^i) \,\d t+   \si\,\d Z_t^i  +\si_0\,\d Z^{0}_t, \quad 1\le i\le n
\end{equation}
is available, in which  $ \mu_t^i:=\mathscr L_{X_t^i|\mathscr F_t^{0}} $. Furthermore, if we replace $\mu_t^i$ in \eqref{EW1} with the associated empirical measure $\hat\mu_t^{n}:=\frac{1}{n}\sum_{j=1}^n\delta_{X_t^{j,n}}$,  the stochastic interacting particle system
\begin{equation}\label{EW2}
\d X_t^{i,n}=b(X_t^{i,n},\hat\mu_t^n) \,\d t+  \si\,\d Z_t^i  +\si_0\,\d Z^{0}_t,\quad 1\le i\le n
\end{equation}
is attainable.  \eqref{EW2} is indeed a classical  $(\R^{d})^n$-valued SDE, which is strongly well-posed (see e.g.  \cite[Theorem 1.1]{BLW})
under Assumption (${\bf A}_1$)
by taking advantage of the fact that  the lifted drift satisfies the so-called weak monotonicity and the weak coercivity.
 Additionally, in the subsequent analysis, it is assumed  that the initial value
$(X_0^{i}, X_0^{i,n})_{1\le i\le n}$
are i.i.d.\ $\mathscr F_0$-measurable random variables.

\subsection{Stochastic Fokker-Planck equation and invariance of $(\mu_\cdot^i)_{1\le i\le n}$}\label{subsec1}
In this subsection, in the first place,
we  aim at showing that the conditional distribution flow $(\mu_t)_{t\ge0}$ solves
an   SFPE, which indeed is a stochastic partial integral differential equation driven by a Poisson random measure.
To start, by means of  the L\'{e}vy-It\^o decomposition, $(Z_t)_{t\ge0}$ and $(Z_t^0)_{t\ge0}$ can be written respectively  as below: for any $t>0,$
\begin{align*}
Z_t=\int_0^t\int_{\{|z|\le1\}}z\tilde N(\d s,\d z)+\int_0^t\int_{\{|z|>1\}}z  N(\d s,\d z)
\end{align*}
and
\begin{align*}
Z_t^0=\int_0^t\int_{\{|z|\le1\}}z\tilde N^0(\d s,\d z)+\int_0^t\int_{\{|z|>1\}}z  N^0(\d s,\d z),
\end{align*}
where $N(\d s,\d z)$ and $N^0(\d s,\d z)$ are  Poission random measures,  supported  on $(\OO^1, \mathscr F^1, (\mathscr F_t^1)_{t\ge0}, \P^1)$ and $(\OO^0, \mathscr F^0,(\mathscr F_t^0)_{t\ge0}, \P^0)$, with  L\'{e}vy measures $\nu(\d z)$ and $\nu^0(\d z)$, respectively.

\begin{proposition}\label{pro-}
The conditional distribution flow $(\mu_t)_{t\ge0}$ solves   the following SFPE:
\begin{equation}\label{T3}
\begin{split}
\d \mu_t&=-{\rm div}(b(\cdot,\mu_t)\mu_t)\,\d t+ \int_{\R^d} \big( \delta_{\sigma z}\ast\mu_t -\mu_t +\si{\rm div}(z\mu_t)\I_{\{|z|\le1\}}\big)\,\nu(\d z)\,\d t\\
&\quad+\int_{\R^d} \big( \delta_{\sigma_0 z}\ast\mu_t -\mu_t +\si_0{\rm div}(z\mu_t)\I_{\{|z|\le1\}}\big)\,\nu^0(\d z)\,\d t\\
&\quad+\int_{\R^d}  (\delta_{\sigma z}\ast\mu_t)(\varphi)-\mu_t(\varphi)\big)  \tilde N^0(\d t,\d z),
\end{split}
\end{equation}
where, for $x\in\R^d$, the probability measure $\delta_x\ast \mu_t$ stands for the convolution between
 $\delta_x$ and $\mu_t.$ The solution to \eqref{T3} is understood in the sense of distribution, that is, for any $\varphi\in C_c^2(\R^d)$,
\begin{equation}\label{T4}
\begin{split}
\d \mu_t(\varphi)&=\mu_t(\<\nn \varphi(\cdot),b(\cdot,\mu_t)\>)\,\d t\\
&\quad+   \int_{\R^d} \big((\delta_{\sigma z}\ast\mu_t)(\varphi)-\mu_t(\varphi)-\si\mu_t(\<\nn\varphi(\cdot),z\>)\I_{\{|z|\le1\}}\big)\,\nu(\d z)\,\d t \\
&\quad+ \int_{\R^d}\big((\delta_{\sigma_0 z}\ast\mu_t)(\varphi)-\mu_t(\varphi)-\si_0\mu_t(\<\nn\varphi(\cdot),z\>)\I_{\{|z|\le1\}}\big) \,\nu^0(\d z)\,\d t\\
&\quad+\int_{\R^d}  (\delta_{\sigma z}\ast\mu_t)(\varphi)-\mu_t(\varphi)\big)  \tilde N^0(\d t,\d z).
\end{split}
\end{equation}

\end{proposition}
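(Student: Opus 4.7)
My plan is to derive \eqref{T4} by applying the It\^o formula for jump semimartingales to $\varphi(X_t)$ for $\varphi\in C_c^2(\R^d)$, and then taking the regular conditional expectation given $\mathscr F_t^0$. Using the L\'evy-It\^o decompositions of $(Z_t)_{t\ge0}$ and $(Z_t^0)_{t\ge0}$ recalled immediately before the statement, together with the integrability \eqref{vv0} (which controls both the small-jump compensator and the $L^2$-norm of the jump integrands via $\varphi\in C_c^2$), I would first write
\begin{align*}
\varphi(X_t)-\varphi(X_0)
&= \int_0^t \<\nn\varphi(X_{s-}),b(X_{s-},\mu_s)\>\,\d s \\
&\quad+ \int_0^t\!\!\int_{\R^d}\big(\varphi(X_{s-}+\si z)-\varphi(X_{s-})-\si\<\nn\varphi(X_{s-}),z\>\I_{\{|z|\le 1\}}\big)\,\nu(\d z)\,\d s\\
&\quad+ \int_0^t\!\!\int_{\R^d}\big(\varphi(X_{s-}+\si_0 z)-\varphi(X_{s-})-\si_0\<\nn\varphi(X_{s-}),z\>\I_{\{|z|\le 1\}}\big)\,\nu^0(\d z)\,\d s\\
&\quad+ M_t^{\tilde N} + M_t^{\tilde N^0},
\end{align*}
where $M^{\tilde N}$ and $M^{\tilde N^0}$ are the compensated-Poisson stochastic integrals against $\tilde N$ and $\tilde N^0$ with integrands $\varphi(X_{s-}+\si z)-\varphi(X_{s-})$ and $\varphi(X_{s-}+\si_0 z)-\varphi(X_{s-})$, respectively.

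Next I would apply $\E[\,\cdot\,|\mathscr F_t^0]$ to both sides. For the drift and the two Poisson compensator terms, the ordinary Fubini theorem reduces everything to conditional expectations of the form $\E[f(X_{s-})|\mathscr F_t^0]$ for bounded continuous $f$, and since for $s\le t$ the increments of $Z^0$ on $(s,t]$ are independent of $(X_s,\mathscr F_s^0)$, one has $\E[f(X_s)|\mathscr F_t^0]=\E[f(X_s)|\mathscr F_s^0]=\mu_s(f)$; this converts the first three lines exactly into the first three lines of \eqref{T4}. The idiosyncratic martingale $M_t^{\tilde N}$ should contribute zero after conditioning: approximating its integrand by simple $\mathbb F$-predictable processes and using the tower property together with the independence of $\tilde N$ from $\mathscr F^0$ yields $\E[M_t^{\tilde N}|\mathscr F_t^0]=0$.

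The main obstacle is the common-noise martingale $M_t^{\tilde N^0}$, which must both survive the conditioning and equal the last line of \eqref{T4}, i.e.\ one has to establish
\begin{equation*}
\E\bigg[\int_0^t\!\!\int_{\R^d}\big(\varphi(X_{s-}+\si_0 z)-\varphi(X_{s-})\big)\,\tilde N^0(\d s,\d z)\,\Big|\,\mathscr F_t^0\bigg]
= \int_0^t\!\!\int_{\R^d}\big((\delta_{\si_0 z}\ast\mu_s)(\varphi)-\mu_s(\varphi)\big)\,\tilde N^0(\d s,\d z).
\end{equation*}
This is a conditional stochastic Fubini statement for Poisson common noise. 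I would prove it first for elementary integrands of the product form $H(\oo^0)K(\oo^1)\I_{(s_0,s_1]\times A}$, where it reduces to the classical Fubini theorem together with the $\mathscr F^0$-measurability of $\tilde N^0$ and the independence of $\mathscr F^0$ from $\mathscr F^1$; the general case then follows by density using the $L^2$-isometry for compensated Poisson integrals, after the usual truncation that handles the large-jump part (finite thanks to the second inequality in \eqref{vv0} and the compact support of $\varphi$).

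Assembling the four contributions then yields \eqref{T4}, which is by definition the distributional form of the SFPE \eqref{T3}, so this completes the proof.
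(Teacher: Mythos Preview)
Your proposal is correct and follows essentially the same route as the paper: apply It\^o's formula to $\varphi(X_t)$, take $\E[\,\cdot\,|\mathscr F_t^0]$, reduce the drift and compensator terms via $\mathscr L_{X_s|\mathscr F_t^0}=\mu_s$ for $s\le t$, show the idiosyncratic martingale vanishes by approximation and independence, and identify the common-noise martingale term via a conditional stochastic Fubini argument. The only cosmetic difference is that the paper outsources the last step to \cite[Lemma B.1]{LSZ} rather than sketching the elementary-integrand plus $L^2$-isometry approximation you describe; your outline of that step is adequate and is in fact the content of that lemma.
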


\begin{proof}
For any $\varphi\in C_c^2(\R^d)$, by applying It\^o's formula, we deduce from \eqref{EW0} that   for any $t\ge0,$
\begin{align*}
\varphi(X_t)&=\varphi(X_0)+\int_0^t\<\nn\varphi(X_s),b(X_s,\mu_s)\>\,\d s\\
&\quad+\int_0^t\int_{\R^d}\big(\varphi(X_s+\si z)-\varphi(X_s)-\si\<\nn\varphi
(X_s),z\>\I_{\{|z|\le1\}}\big)\,\nu(\d z)\,\d s\\
&\quad+\int_0^t\int_{\R^d}\big(\varphi(X_s+\si_0 z)-\varphi(X_s)-\si_0\<\nn\varphi
(X_s),z\>\I_{\{|z|\le1\}}\big)\,\nu^0(\d z)\,\d s\\
&\quad+\int_0^t\int_{\R^d}\big(\varphi(X_s+\si z)-\varphi(X_s)\big)\tilde N(\d s,\d z)\\
&\quad+\int_0^t\int_{\R^d}\big(\varphi(X_s+\si_0 z)-\varphi(X_s)\big)\tilde N^0(\d s,\d z)\\
&= :\varphi(X_0)+\sum_{i=1}^5I_i(t).
\end{align*}
Subsequently, for given $t\ge0,$ taking conditional expectations with respect to $\mathscr F_t^{0}$ yields that
\begin{align*}
\E\big(\varphi(X_t)\big|\mathscr F_t^{0}\big)= \E\big(\varphi(X_0)\big|\mathscr F_t^{0}\big)+\sum_{i=1}^5\E\big(I_i(t)\big|\mathscr F_t^{0}\big).
\end{align*}
Set $\mathscr F_t^X:=\sigma(X_s: s\le t)$, i.e.,
the sigma algebra generated by $(X_s)_{s\ge0}$ up to time $t.$  For any $0\le s\le t, $
since $\mathscr F_s ^X$ is conditionally independent of $\scr F_t^{ 0}$ conditioned on $\scr F_s^{ 0}$, we have
\begin{align*}
\mu_s =\scr L_{X_s|\scr F_t^{ 0}},\quad \mbox{ a.s.}, \quad 0\le s\le t.
\end{align*}
Whence, we find that
\begin{align}\label{T1}
&\E\big(\varphi(X_0)\big|\mathscr F_t^{ 0}\big)+\sum_{i=1}^3\E\big(I_i(t)\big|\mathscr F_t^{ 0}\big)\nonumber\\
&=\mu_0(\varphi)+\int_0^t\mu_s(\<\nn \varphi(\cdot),b(\cdot,\mu_s)\>)\,\d s\nonumber\\
&\quad+  \int_0^t\int_{\R^d}\mu_s\big(\varphi(\cdot+\si z)-\varphi(\cdot)-\si\<\nn\varphi(\cdot),z\>\I_{\{|z|\le1\}}\big)\,\nu(\d z)\,\d s\nonumber\\
&\quad+\int_0^t\int_{\R^d}\mu_s\big(\varphi(\cdot+\si_0 z)-\varphi(\cdot)-\si_0\<\nn\varphi(\cdot),z\>\I_{\{|z|\le1\}}\big)\,\nu^0(\d z)\,\d s\nonumber\\
&=\mu_0(\varphi)+\int_0^t\mu_s(\<\nn \varphi(\cdot),b(\cdot,\mu_s)\>)\,\d s\\
&\quad+  \int_0^t\int_{\R^d} \big((\delta_{\sigma z}\ast\mu_s)(\varphi)-\mu_s(\varphi)-\si\mu_s(\<\nn\varphi(\cdot),z\>)\I_{\{|z|\le1\}}\big)\,\nu(\d z)\,\d s\nonumber\\
&\quad+\int_0^t\int_{\R^d}\big((\delta_{\sigma_0 z}\ast\mu_s)(\varphi)-\mu_s(\varphi)-\si_0\mu_s(\<\nn\varphi(\cdot),z\>)\I_{\{|z|\le1\}}\big) \,\nu^0(\d z)\,\d s. \nonumber
\end{align}
Via an approximation trick, besides the independence between $(Z_t)_{t\ge0}$ and $(Z_t^0)_{t\ge0}$,
it is easy to see that
\begin{align}\label{T2}
\E\big(I_4(t)\big|\mathscr F_t^{ 0}\big)=0.
\end{align}
Next, by repeating exactly  the proof of \cite[Lemma B.1]{LSZ}, we derive that  for any $t\ge0,$
\begin{align*}
 \E\big(I_5(t)\big|\mathscr F_t^{ 0}\big) &=\int_0^t\int_{\R^d}\mu_s\big( \varphi(\cdot+\si z)-\varphi(\cdot)\big)  \tilde N^0(\d s,\d z)\\
&=\int_0^t\int_{\R^d}  (\delta_{\sigma z}\ast\mu_s)(\varphi)-\mu_s(\varphi)\big)  \tilde N^0(\d s,\d z).
\end{align*}
 This, combining \eqref{T1} with \eqref{T2}, yields \eqref{T4} so that \eqref{T3} follows directly.
\end{proof}

\begin{remark}
When the common noise is a standard Brownian motion and the idiosyncratic noise is a compensated Poisson process,
the associated SFPE has been established in \cite[Theorem 2.2]{AO} and
 \cite[Theorem 3.3]{AO2} via the  Fourier transformation. Nonetheless, we herein finish the proof of Proposition \ref{pro-} by the aid of an alternative approach which is inspired by that of \cite[Proposition 1.2]{LSZ}, where both the common noise  and  the idiosyncratic noise are Brownian motions.
\end{remark}

The following proposition reveals the fact that $(\mu_t^i)_{1\le i\le n}$ are unchanging almost  surely provided that the associated jump idiosyncratic  noises are independent and identically distributed, and that the   jump common   noise remains unchanged.

\begin{proposition}\label{pro4}
 Under $({\bf A}_1)$, for any given $T>0 $ and all $i=1,\cdots, n,$
\begin{align*}
\P^0\big(\mu_t=\mu_t^i\mbox{ for all } t\in[0,T]\big)=1,
\end{align*}
where $(\mu_t)_{t\ge0}$ and $(\mu_t^i)_{t\ge0}$ are conditional distribution flow associated with \eqref{EW0} and \eqref{EW1}, respectively.
\end{proposition}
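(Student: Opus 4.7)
\emph{Plan.} The strategy is to identify $(\mu_t^i)_{t\ge0}$ and $(\mu_t)_{t\ge0}$ as solutions of the same stochastic Fokker-Planck equation \eqref{T4}, with the same initial datum and driven by the same common-noise Poisson random measure $\tilde N^0$; pathwise uniqueness of this SFPE then forces the two flows to coincide almost surely.

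First, I would re-run the derivation of Proposition~\ref{pro-} verbatim with the particle system \eqref{EW1} replacing \eqref{EW0}: apply It\^o's formula to $\varphi(X_t^i)$ for $\varphi\in C_c^2(\R^d)$, then take conditional expectations with respect to $\mathscr F_t^{0}$. Since each idiosyncratic noise $Z^i$ is independent of $\mathscr F^0$ and has the same L\'evy measure $\nu$ as $Z$, the martingale driven by the associated $\tilde N^i$ vanishes upon conditioning (the exact analogue of \eqref{T2}), whereas the common-noise martingale driven by $\tilde N^0$ survives intact. This yields that $(\mu_t^i)_{t\ge0}$ solves the same SFPE \eqref{T4} as $(\mu_t)_{t\ge 0}$. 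Moreover, since $X_0$ and $X_0^i$ are i.i.d.\ $\mathscr F_0$-measurable random variables that are independent of $\mathscr F_0^{0}$, the initial conditional laws agree: $\mu_0=\mu_0^i=\mathscr L_{X_0}$ almost surely.

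The principal technical point is then pathwise uniqueness for \eqref{T4}. I would handle this by a coupling-Gronwall argument: given two $\mathscr F^0$-adapted solutions $(\mu_t)$ and $(\nu_t)$ with $\mu_0=\nu_0$, construct on $(\Omega^1,\P^1)$ two synchronously coupled processes $X_t^\mu,X_t^\nu$ solving
\begin{align*}
\d X_t^\mu&=b(X_t^\mu,\mu_t)\,\d t+\si\,\d Z_t+\si_0\,\d Z_t^0,\\
\d X_t^\nu&=b(X_t^\nu,\nu_t)\,\d t+\si\,\d Z_t+\si_0\,\d Z_t^0,
\end{align*}
with $X_0^\mu=X_0^\nu$. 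The one-sided Lipschitz bound \eqref{E3}, combined with the conditional estimate $\mathbb W_1(\mu_t,\nu_t)\le\E(|X_t^\mu-X_t^\nu|\mid\mathscr F_t^{0})$, leads after taking expectations to a closed Gronwall inequality of the form $\E|X_t^\mu-X_t^\nu|^2\le C\int_0^t\E|X_s^\mu-X_s^\nu|^2\,\d s$, which forces $X_t^\mu=X_t^\nu$ a.s.\ and hence $\mu_t=\nu_t$ a.s.\ for every fixed $t$. Applying this with $(\nu_t)=(\mu_t^i)$ gives $\mu_t=\mu_t^i$ a.s.\ at each $t$, and the simultaneous statement on $[0,T]$ follows via the right-continuity of both $(\scr P_1(\R^d),\mathbb W_1)$-valued paths together with a countable-dense-set argument. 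The main obstacle is precisely this uniqueness step, which requires jointly exploiting \eqref{E1}--\eqref{E2} and the fact that the common-noise jump martingale $\tilde N^0$ has zero expectation; the shift terms $\delta_{\si z}\ast\mu$ arising from the Poisson jumps act as $\mathbb W_1$-isometries and contribute nothing to the growth estimate.
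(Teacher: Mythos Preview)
Your route differs from the paper's and contains a real gap. The paper does not go through the SFPE at all; it argues directly with strong well-posedness of the SDEs (following \cite[Proposition~2.11]{CD2}). Since \eqref{EW1} is strongly well-posed there is a measurable solution map $\Phi$ with $X^i_{[0,T]}=\Phi(X_0,Z^0_{[0,T]},\mu^i_{[0,T]},Z^i_{[0,T]})$ a.s. One then introduces the auxiliary process $U^i$ solving the \emph{decoupled} SDE \eqref{ET6} with the frozen flow $\mu^i$ but driven by the original idiosyncratic noise $Z$ in place of $Z^i$; because $Z$ and $Z^i$ are i.i.d.\ on $(\Omega^1,\P^1)$ and independent of $\mathscr F^0$, the conditional law of $U^i_t$ given $\mathscr F_t^0$ equals $\mu_t^i$. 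Hence $U^i$ is itself a solution of the conditional McKean-Vlasov SDE \eqref{EW0}, and strong uniqueness of \eqref{EW0} forces $U^i=X$ a.s., whence $\mu_t=\mu_t^i$ for all $t\in[0,T]$, $\P^0$-a.s.

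In your scheme the decisive step is the bound $\mathbb W_1(\mu_t,\nu_t)\le\E\big(|X_t^\mu-X_t^\nu|\,\big|\,\mathscr F_t^{0}\big)$, and this is precisely what is unjustified. It would hold only if $\mathscr L_{X_t^\nu|\mathscr F_t^{0}}=\nu_t$, i.e.\ if the decoupled process you build from an $\mathscr F^0$-adapted SFPE solution $(\nu_t)$ reproduces that flow as its own conditional law. For a generic solution of \eqref{T4} this is a genuine superposition principle (cf.\ \cite{LSZ}) and is far from automatic; for the specific choice $\nu_t=\mu_t^i$ it \emph{can} be checked, but only via the noise-swap argument $Z\leftrightarrow Z^i$ that the paper uses and that you omit. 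Without this identification your Gr\"onwall loop does not close: the term $\mathbb W_1(\mu_t,\nu_t)$ coming from the drift difference is not controlled by $\E|X_t^\mu-X_t^\nu|$. Once you supply the swap argument, the SFPE detour becomes superfluous---the paper's direct appeal to strong uniqueness of \eqref{EW0} already finishes the proof.
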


\begin{proof}
Since the proof  is similar to   that of \cite[Proposition 2.11]{CD2},  we herein give merely  a sketch   to make the content self-contained.

For fixed $T>0$ and a Polish space $\mathbb U$, let $D([0,T];\mathbb U)$ be the collection of functions $f:[0,T]\to\mathbb U$, which are right-continuous with left limits. For $\xi\in D([0,T];\mathbb U)$, we write $\xi_{[0,T]}$ as the path of $\xi$ up to $T.$ In the following analysis, we fix $1\le i\le n $ and the terminal $T$.
Under $({\bf A}_1)$,  the SDE \eqref{EW1} is strongly well-posed so that  there exists a measurable map: $$\Phi:\R^d\times   D([0,T];\R^d)\times D([0,T];\mathscr P_1(\R^d))\times D([0,T];\R^d)\to D([0,T];\R^d)$$
such that
\begin{align*}
\P\big(X^i_{[0,T]}=\Phi(X_0, Z^0_{[0,T]},\mu^i_{[0,T]},Z^i_{[0,T]})\big)=1.
\end{align*}
For $\mu^i_{[0,T]}$ given  previously, consider the following decoupled SDE:
\begin{align}\label{ET6}
\d U_t^i=b(U_t^i,\mu^i_t)\,\d t+\si\,\d Z_t  +\si_0\,\d Z^{0}_t,\quad t\in[0,T]; \quad U_0^i=X_0.
\end{align}
Once more, via the strong well-posedness of \eqref{ET6}, we have
\begin{align*}
\P\big(U^i_{[0,T]}=\Phi(X_0, Z^0_{[0,T]},\mu^i_{[0,T]},Z_{[0,T]})\big)=1.
\end{align*}
Due to the fact that $(Z_t)_{t\ge0}$ and $(Z_t^i)_{t\ge0}$, supported on $(\Omega^1,\mathscr F^1,\mathbb P^1)$, are identically distributed,
we find that for $\mathbb P^0$-a.s. $\omega^0\in\Omega^0$,
\begin{align}\label{TE}
\mathscr L_{U_t^i(\omega^0,\cdot)}=(\mathscr L_{U^i_t|\mathscr F_t^0})(\omega^0)=\mu_t^i(\omega^0),\quad t\in[0,T].
\end{align}
Whence, we arrive at
\begin{align*}
\P\big(U^i_{[0,T]}=X_{[0,T]}\big)=1.
\end{align*}
This, along with \eqref{TE},  further yields that
\begin{align*}
\mu_t(\omega^0)=\mathscr L_{X_t(\omega^0,\cdot)}=\mathscr L_{U_t^i(\omega^0,\cdot)}=\mu_t^i(\omega^0),\quad t\in[0,T].
\end{align*}
Thus, the proof is complete.
\end{proof}

\subsection{Conditional PoC in  finite  time}

In the past few decades,   the issue on the convergence rate of the (conditional) PoC in a finite horizon
concerning (conditional) McKean-Vlasov SDEs driven by L\'{e}vy processes has
been studied extensively.   In particular, we allude to e.g. \cite[Proposition 3.1]{NBKG} and \cite[Proposition 3.2]{KKLN}, in which  the L\'{e}vy measure involved  enjoys a higher-order moment. In case the conditional McKean-Vlasov SDEs driven by  the   jump L\'{e}vy process with the heavy-tailed property,
we refer to  \cite[Theorem 2]{Cava} and \cite[Theorem 1.3]{BLW} focusing on  the conditional PoC,
where the drift terms   under consideration   fulfil   the  Lipschitz continuity and  the   weak monotonicity, respectively. No matter what \cite{BLW,Cava}  or  \cite{KKLN,NBKG}, the higher-order moment of the initial distribution is necessitated
to obtain  the desired convergence rate of   the conditional PoC. Nevertheless, in the present work the
qualitative
 convergence (instead of the quantitative convergence rate) of the conditional PoC is sufficient to realize our desired goal. In contrast to \cite{BLW,Cava,KKLN,NBKG}, the convergence of the conditional PoC can be reached  under weaker assumptions as shown in the following proposition.

\begin{proposition}\label{pro1}
Let $((X_t^i)_{t\ge0})_{1\le i\le n}$ and $((X_t^{i,n})_{t\ge0})_{1\le i\le n}$ with $X_0^i=X_0^{i,n}, 1\le i\le n, $ be solutions to  \eqref{EW1} and \eqref{EW2}, respectively.
Under $({\bf A}_1)$ and   $\E|X_0^1|<\8$,
\begin{itemize}
\item[{\rm(i)}] for each given $t\ge0$ and any  $1\le i\le n$,
\begin{align}\label{E4}
\lim_{n\to\8}\E\mathbb W_1(\mu_t^i,\tilde\mu_t^n)=0 \quad \mbox{ with } \quad \tilde\mu_t^n:=\frac{1}{n}\sum_{j=1}^n\delta_{X_t^j};
\end{align}
\item[{\rm(ii)}]  for each given $t\ge0$ and any  $1\le i\le n$,
\begin{align}\label{E5}
\lim_{n\to\8}\E|X_t^i-X_t^{i,n}|=0.
\end{align}

\end{itemize}

\end{proposition}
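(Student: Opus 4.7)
The plan rests on two preliminary facts already established. First, Proposition~\ref{pro4} gives $\mu_s^i=\mu_s$ almost surely for every $i$ and every $s\le t$, where $\mu_s:=\mathscr L_{X_s|\mathscr F_s^0}$. Second, since the idiosyncratic noises $(Z^i)_i$ are i.i.d.\ and independent of $Z^0$, for each $s\le t$ the random variables $(X_s^j)_{j\ge 1}$ are conditionally i.i.d.\ given $\mathscr F_s^0$ with common conditional law $\mu_s$. A standard moment estimate using $({\bf A}_1)$, the integrability \eqref{vv0}, and $\E|X_0^1|<\8$ also yields $\sup_{s\le t}\E|X_s^1|<\8$, providing the uniform integrability needed below.

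For part (i) I apply a conditional version of the Varadarajan--Dudley theorem: the empirical measure of conditionally i.i.d.\ samples with finite first moment converges to its conditional law in $\mathbb W_1$ almost surely, so $\mathbb W_1(\tilde\mu_s^n,\mu_s)\to 0$ almost surely. To promote this to $L^1$ convergence I use the triangle-type bound $\mathbb W_1(\tilde\mu_s^n,\mu_s)\le \tilde\mu_s^n(|\cdot|)+\mu_s(|\cdot|)$. The right-hand side converges almost surely to $2\mu_s(|\cdot|)$ by the conditional law of large numbers and has constant expectation $2\E|X_s^1|$; Scheff\'e's lemma then delivers uniform integrability of the dominating sequence, hence $\E\mathbb W_1(\tilde\mu_s^n,\mu_s)\to 0$.

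For part (ii) set $D_t^i:=X_t^i-X_t^{i,n}$. Since the driving noises $\si\,\d Z^i+\si_0\,\d Z^0$ cancel in the difference of \eqref{EW1} and \eqref{EW2}, $D^i$ is absolutely continuous with
$$\frac{\d}{\d t}D_t^i=b(X_t^i,\mu_t^i)-b(X_t^{i,n},\hat\mu_t^n).$$
Applying the chain rule to the smooth regularisation $v_\vv(x)=\sqrt{|x|^2+\vv}$, splitting the drift difference through $b(X_t^{i,n},\mu_t^i)$, and invoking $({\bf A}_1)$ together with $|\nabla v_\vv|\le 1$, I obtain (after passing $\vv\downarrow 0$ and taking expectations)
$$\E|D_t^i|\le L_1\int_0^t\E|D_s^i|\,\d s+L_2\int_0^t\E\mathbb W_1(\mu_s^i,\hat\mu_s^n)\,\d s.$$
The coupling $\mathbb W_1(\tilde\mu_s^n,\hat\mu_s^n)\le\frac1n\sum_{j=1}^n|D_s^j|$ combined with exchangeability yields $\E\mathbb W_1(\tilde\mu_s^n,\hat\mu_s^n)\le\E|D_s^1|$; the triangle inequality, Gr\"onwall's lemma, and part~(i)---together with dominated convergence inside $\int_0^t$, justified by the uniform bound $\E\mathbb W_1(\mu_s^1,\tilde\mu_s^n)\le 2\E|X_s^1|$---close the argument.

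The principal obstacle is the upgrade from almost-sure to $L^1$ convergence in part~(i): because $\mu_s$ is genuinely random and only a first-moment hypothesis on the initial data is available, the customary $L^p$-moment route is unavailable, forcing the Scheff\'e-based uniform-integrability argument above. The remaining steps are routine.
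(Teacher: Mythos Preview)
Your proposal is correct and follows essentially the same route as the paper: for (i) you invoke the almost-sure Wasserstein convergence of the empirical measure of conditionally i.i.d.\ samples to its conditional law and then upgrade to $L^1$ via the dominating sequence $\tilde\mu_s^n(|\cdot|)+\mu_s(|\cdot|)$, while for (ii) you difference the two equations, apply the smoothed absolute value $\sqrt{|\cdot|^2+\vv}$, use $({\bf A}_1)$ together with the coupling bound $\mathbb W_1(\tilde\mu_s^n,\hat\mu_s^n)\le\frac1n\sum_j|D_s^j|$ and exchangeability, and close with Gr\"onwall. The only cosmetic difference is that the paper organises the $L^1$-upgrade in (i) as a two-level dominated convergence (first under $\E^1$ with dominant $2\mu_t^i(|\cdot|)$, then under $\E^0$), whereas you collapse this into a single Scheff\'e/uniform-integrability step; both rest on the same moment identity $\E[\tilde\mu_s^n(|\cdot|)]=\E[\mu_s(|\cdot|)]=\E|X_s^1|$.
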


\begin{proof}  To achieve  \eqref{E4} and \eqref{E5}, as a starting point,
we claim  that there exists a constant $c_0>0$ such that for any $t\ge0$ and $1\le i\le n$,
\begin{align}\label{EX}
\E  |X_t^i|\le c_0\big(1+t+\E|X_0^i|\big)\e^{c_0t}.
\end{align}
To this end, we define the Lyapunov function $V(x)=(1+|x|^2)^{\frac{1}{2}}, x\in\R^d$.
By applying It\^o's formula, it is easy to see that
\begin{align*}
 \d V(X_t^i)&=\<\nn V(X_t^{i}),b(X_t^i,\mu_t^i)\>\,\d t
 +\int_{\{|z|\le1\}}\big(V(X_t^i+\si z)-V(X_t^i)-\si\<\nn V(X_t^{i}), z\> \big)\,\nu(\d z)\d t\\
 &\quad+\int_{\{|z|\le1\}}\big(V(X_t^i+\si_0 z)-V(X_t^i)-\si_0\<\nn V(X_t^{i}), z\> \big)\,\nu^0(\d z)\d t \\
 &\quad+\int_{\{|z|>1\}}\big(V(X_t^i+\si z)-V(X_t^i) \big)\,\nu(\d z)\d t\\
 &\quad+\int_{\{|z|>1\}}\big(V(X_t^i+\si_0 z)-V(X_t^i) \big)\,\nu^0(\d z)\d t  +\d M_t^i\\
 &=:\<\nn V(X_t^{i}),b(X_t^i,\mu_t^i)\>\,\d t+(I_t^{1,i} +I_t^{2,i}+I_t^{3,i} +I_t^{4,i} )\,\d t+ \d M_t^i,
\end{align*}
 where $(M_t^i)_{t\ge0}$ is a martingale. By invoking \eqref{E1} and \eqref{E2}, we obviously have for all $x\in\R^d$ and $\mu\in\mathscr P_1(\R^d)$,
 \begin{align}\label{ET}
(1+|x|^2)^{-\frac{1}{2}} \<x,b(x,\mu)\>
 &\le L_1|x| +L_2\mu(|\cdot|)+|b({\bf0},\delta_{\bf0})|.
 \end{align}
 Note that
 \begin{align*}
  \nn V(x)=(1+|x|^2)^{-\frac{1}{2}}x\quad \mbox{ and } \quad \nn^2V(x)=(1+|x|^2)^{-\frac{1}{2}}I_d-(1+|x|^2)^{-\frac{3}{2}}xx^\top,\quad x\in\R^d,
  \end{align*}
 where $x^\top$ denotes the transpose of $x\in\R^d.$
Then, the Taylor expansion enables us to derive that
  \begin{align}\label{ET1}
  I_t^{3,i} +I_t^{4,i}\le |\si| \int_{\{|z|>1\}}|z|\,\nu(\d z)+|\si_0|\int_{\{|z|>1\}}|z|\,\nu^0(\d z),
  \end{align}
 and
 \begin{align}\label{ET2}
    I_t^{1,i} +I_t^{2,i}\le\frac{1}{2} \si^2  \int_{\{|z|\le1\}}|z|^2\,\nu(\d z)+ \frac{1}{2}
    \si_0^2 \int_{\{|z|\le1\}}|z|^2\,\nu^0(\d z).
 \end{align}
 Subsequently, by combining \eqref{ET} with \eqref{ET1} and \eqref{ET2} and making use of
  the fact that
  \begin{align}\label{ET9}
 \E\mu_t^i(|\cdot|)=
  \E^0\mu_t^i(|\cdot|)=
 \E^0\big(\E^1\big(|X_t^i|\big|\mathscr F_t^0\big)\big)=\E|X_t^i|,
 \end{align} there exists a constant $c_1>0$ such that
 \begin{align}\label{ET4}
 \E|X_t^i|\le 1+\E|X_0^i|+2c_1\int_0^t(1+\E|X_s^i|)\d s.
\end{align}
As a consequence,  \eqref{EX} is reachable by applying    Gr\"onwall's inequality.

Notice that
$$\E\mathbb W_1(\mu_t^i,\tilde\mu_t^n)=\E^0\big(\E^1\mathbb W_1(\mu_t^i,\tilde\mu_t^n)\big)\quad \mbox{ and } \quad \E^1\mathbb W_1(\mu_t^i,\tilde\mu_t^n) \le 2 \mu_t^i(|\cdot|).$$
Thus, by applying the dominated convergence theorem,
the assertion \eqref{E4} is available provided that $\E^0\mu_t^i(|\cdot|)<\8$ and
\begin{align}\label{ET3}
\P^0\Big(\lim_{n\to\8}\E^1\mathbb W_1(\mu_t^i,\tilde\mu_t^n)=0\Big)=1.
\end{align}
In fact, $\E^0\mu_t^i(|\cdot|)<\8$ is guaranteed by taking advantage of \eqref{EX}
and \eqref{ET9}.
Next,
since   $\tilde\mu_t^n$
converges weakly to $\mu_t^i$,  $\P^0$-almost surely, and
\begin{equation*}
\P^1\Big(\lim_{n\to\8}\tilde\mu_t^n(|\cdot|)=\mu_t^i(|\cdot|)\Big)=1,
\end{equation*}
we deduce from    \cite[Theorem 5.5]{CD1} that
\begin{align*}
\P^1\Big(\lim_{n\to\8}\mathbb W_1(\mu_t^i,\tilde\mu_t^n)=0\Big)=1, \quad \P^0\mbox{-almost surely}.
\end{align*}
Subsequently, \eqref{ET3} is available by using  the dominated convergence theorem and noting that
\begin{align*}
 \mathbb W_1(\mu_t^i,\tilde\mu_t^n)\le  \mu_t^i(|\cdot|)+\tilde\mu_t^n(|\cdot|)
\end{align*}
as well as  the fact that $X_t^i$ and $X_t^j$ are identically distributed given the filtration $\mathscr F_t^0$.

For notational simplicity, we set $Q_t^{i,n}:=X_t^i-X_t^{i,n}$.
It is easy to see that
$$\d Q_t^{i,n}=(b(X_t^i,\mu_t^i)-b(X_t^{i,n},\hat{\mu}_t^n))\,\d t.$$
By the chain rule, it follows from \eqref{E3} and $X_0^i=X_0^{i,n}$
that for any $\vv>0,$
\begin{align*}
(\vv+|Q_t^{i,n}|^2)^{\frac{1}{2}}&\le \ss\vv +(L_1\vee L_2)\int_0^t\big(|Q_s^{i,n}|+\W_1(\mu_s^i,\hat{\mu}_s^n)\big)\,\d s\\
&\le \ss\vv+(L_1\vee L_2)\int_0^t\Big(|Q_s^{i,n}|+\mathbb W_1(\mu_s^i,\tilde\mu_s^n)+\frac{1}{n}\sum_{j=1}^n|Q_s^{j,n}|\Big)\,\d s.
\end{align*}
This, together with the fact that
 $(X_t^i,X_t^{i,n})_{1\le i\le n}$ are identically distributed
by recalling that $(X_0^{i}, X_0^{i,n})_{1\le i\le n}$
  are i.i.d.
   $\mathscr F_0$-measurable
   random variables,
gives that
\begin{align*}
\E(\vv+|Q_t^{i,n}|^2)^{\frac{1}{2}}
&\le \ss\vv+(L_1\vee L_2)\int_0^t\big(2\E|Q_s^{i,n}|+\E\mathbb W_1(\mu_s^i,\tilde\mu_s^n) \big)\,\d s.
\end{align*}
At length,   \eqref{E5}   holds true  from  Gr\"onwall's inequality followed by leveraging \eqref{E4} and sending $\vv\to0$.
\end{proof}

\subsection{Asymptotic coupling by reflection}\label{section2.2}
In the beginning, we introduce some additional notation.
For given $\vv>0,$ define a cut-off function $h_\vv$  as below:
 \begin{equation}\label{E6}
h_\vv(r)=
\begin{cases}
0,\qquad \qquad\qquad\qquad\qquad \qquad  \qquad \qquad r\in[0,\vv],\\
6\Big(\frac{r-\vv}{\vv}\Big)^5-15\Big(\frac{r-\vv}{\vv}\Big)^4+10\Big(\frac{r-\vv}{\vv}\Big)^3,\quad\quad~ r\in(\vv,2\vv),\\
1,\qquad \qquad\qquad\qquad\qquad\quad\quad \qquad \qquad ~  r\ge 2\vv.
\end{cases}
\end{equation}
The unit vector ${\bf n}(x)$ related to  $x\in\R^d$ is defined in the form:
 $${\bf n}(x):=\frac{x}{|x|}\I_{\{x\neq {\bf0}\}}+(1,0,\cdots,0)^\top\I_{\{x= {\bf0}\}}.$$
In this subsection, we postulate that
$\rho:(\R^d)^n\to[0,\8)$ and  $\phi:(\R^d)^n\to\R^d,$
whose explicit expressions will be given explicitly in Section \ref{sec3}. In addition, for $\vv>0,$
 we define the approximate reflection matrix $\Pi_{\vv}$ as follows:
 for any
${\bf x}:=(x^1,\cdots,x^n)\in  (\R^d)^n$,
\begin{equation}\label{Pi}
\Pi_{\vv,d}({\bf x}):=I_d-2h_\vv(\rho({\bf x})){\bf n}(\phi( {\bf x}))\otimes {\bf n}(\phi({\bf x})).
\end{equation}
Specifically,
for the case $d=1$,
  $\Pi_{\vv,1}({\bf x}) =1-2h_\vv(\rho({\bf x})),$  which is independent of the choice of the function $\phi$.

Before we move on to  construct the asymptotic coupling by reflection
associated with  the stochastic non-interacting particle system \eqref{EW1} and the corresponding stochastic interacting particle system \eqref{EW2}, some warm-up work need to done.  Via the L\'evy-It\^o decomposition, for each fixed $i=0,1,\cdots, n,  $ $(Z_t^i)_{t\ge0}$ can be expressed as below:
$$
Z_t^i=\int_0^t\int_{\{|z|>1\}}z\,N^i(\d s,\d z)+\int_0^t\int_{\{|z|\le 1\}}z\,\wt{N}^i(\d s,\d z), \quad t\ge0,
$$
where $N^i(\d s,\d z)$ is the Poisson random measure with the
common
intensity measure $\d s\nu(\d z)$, and $\wt{N}^i(\d s,\d z)$ is the corresponding  compensated Poisson random measure, i.e.,
$$
\wt{N}^i(\d s,\d z)=N^i(\d s,\d z)-\d s\nu(\d z),\quad i=0,1,\cdots,n.
$$
In the sequel, for the sake of simplicity,  we write  $$\ol{N}^i(\d t,\d z) =\I_{(0,1]}(|z|)\,\wt{N}^i(\d s,\d z)+\I_{(1,\8)}(|z|)\,N^i(\d s,\d z),\quad i=0,1,\cdots,n
.$$
Correspondingly, we have
$$Z_t^i=\int_{\R^d}z\,\ol{N}^i(\d t,\d z), \quad i=0,1,\cdots,n.$$

With the previous notation at hand,   we build the following  approximate stochastic  interacting particle system: for   $i=1,\cdots,n $ and $\vv>0,$
\begin{equation}\label{EW3}
\begin{cases}
\d  X_t^i=b(X_t^i,\mu_t^i) \d t+  \si\,\d Z_t^i  +\si_0\,\d Z^{0}_t,\\
\d  X_t^{i,n,\vv}=b(X_t^{i,n,\vv},\hat\mu_t^{n,\vv}) \d t\\
    \qquad\qquad\,+\si\displaystyle\int_{\{|z|\le \frac{1}{2|\si|}|Z_t^{i,n,\vv}|\}}\Pi_{\vv,d}
    ({\bf Z}_t^{n,\vv})z\,\ol{N}^i(\d t,\d z)
     +\si\displaystyle\int_{\{|z|>\frac{1}{2|\si|}|Z_t^{i,n,\vv}|\}}z\,\ol{N}^i(\d t,\d z)\\
    \qquad\qquad\,+\si_0\displaystyle\int_{\{|z|\le\frac{1}{2|\si_0|}|Z_t^{i,n,\vv}|\}}\Pi_{\vv,d}
    ({\bf Z}_t^{n,\vv})z\,\ol{N}^0(\d t,\d z)
     +\si_0\displaystyle\int_{\{|z|>\frac{1}{2|\si_0|}|Z_t^{i,n,\vv}|\}}z\,\ol{N}^0(\d t,\d z),\\
\end{cases}
\end{equation}
where   $X_0^{i,n,\vv} = X_0^{i,n} $,  $(X_0^{i}, X_0^{i,n})_{1\le i\le n}$
  are i.i.d.\ $\mathscr F_0$-measurable
   random variables,
$\hat\mu_t^{n,\vv}:=\frac{1}{n}\sum_{j=1}^n\delta_{  X_t^{j,n,\vv}}$,  $ Z_t^{i,n,\vv}:=X_t^{i}-X_t^{i,n,\vv}$, ${\bf Z}_t^{n,\vv}:={\bf X}_t^{n}-{\bf X}_t^{n,n,\vv}$ with
 $
 {\bf X}_t^{n}: =\big(X_t^{1}, \cdots,X_t^{n}\big)$ and ${\bf X}_t^{n,n,\vv}: =\big(X_t^{1,n,\vv}, \cdots,X_t^{n,n,\vv}\big).$
 Roughly speaking,   in \eqref{EW3} the asymptotic coupling by reflection is employed for small jumps, and the synchronous coupling is explored  for large jumps.

  The main result in this part is presented as follows.

 \begin{proposition}\label{pro2}
Fix $n\ge1$ and $T>0$. Let $({\bf X}^{n}_{[0,T]},{\bf X}^{n,n,\vv}_{[0,T]})_{\vv>0}=(({\bf X}^{n}_t)_{t\in[0,T]},({\bf X}^{n,n,\vv}_t)_{t\in[0,T]})_{\vv>0}$ be the process determined by \eqref{EW3} such that the initial value   $({\bf X}^{n}_0,{\bf X}^{n,n,\vv}_0)_{\vv>0}$ satisfies all  properties mentioned above.
Under  $({\bf A}_1)$,
 $({\bf X}^{n}_{[0,T]},{\bf X}^{n,n,\vv}_{[0,T]})_{\vv>0}$ has a  weakly convergent  subsequence
  such that the corresponding weak limit process is the coupling process of ${\bf X}^{n}_{[0,T]}$ and ${\bf X}^{n,n}_{[0,T]}$,
 where  ${\bf X}^{n,n}_{[0,T]}:=({\bf X}^{n,n}_t)_{t\in[0,T]}$ with
 ${\bf X}_t^{n,n}: =\big(X_t^{1,n}, \cdots,X_t^{n,n}\big)$ for any $t\ge0.$
\end{proposition}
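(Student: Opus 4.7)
The strategy is the standard three-step scheme: (i) establish tightness of the family $\{\mathscr L({\bf X}^{n}_{[0,T]}, {\bf X}^{n,n,\vv}_{[0,T]})\}_{\vv > 0}$ on the Skorokhod space $D([0,T]; (\R^d)^{2n})$; (ii) apply Prokhorov's theorem to extract a weakly convergent subsequence along some $\vv_k \downarrow 0$, with weak limit $(\tilde{\bf X}^n_{[0,T]}, \tilde{\bf X}^{n,n}_{[0,T]})$; and (iii) show that the two marginals of the weak limit coincide respectively with $\mathscr L({\bf X}^n_{[0,T]})$ and $\mathscr L({\bf X}^{n,n}_{[0,T]})$, so that the limit is a coupling by definition. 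For step (i), the first marginal is constant in $\vv$. For the second, a direct spectral computation from \eqref{Pi} shows that $\Pi_{\vv,d}({\bf x})$ has eigenvalues $1 - 2h_\vv(\rho({\bf x})) \in [-1, 1]$ and $1$ (with multiplicity $d - 1$), so $|\Pi_{\vv,d}({\bf x}) z| \le |z|$ uniformly in $\vv$ and ${\bf x}$. Plugging this contraction into the It\^o-formula computation carried out for \eqref{EX} in the proof of Proposition \ref{pro1}, together with $({\bf A}_1)$ and \eqref{vv0}, gives a uniform bound $\sup_{\vv > 0} \E \sup_{t \in [0, T]} |X_t^{i, n, \vv}| \le C_T$. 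Tightness in Skorokhod space then follows from Aldous' criterion for processes driven by Poisson random measures.

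For step (iii), after a Skorokhod representation we may assume almost-sure convergence on an auxiliary probability space, and the first marginal is identified trivially. For the second, the crucial point is that $h_\vv(r) \to \I_{(0, \8)}(r)$ pointwise as $\vv \to 0$, so on $\{\rho({\bf x}) > 0\}$ the matrix $\Pi_{\vv, d}({\bf x})$ converges to the genuine reflection $I_d - 2\, {\bf n}(\phi({\bf x})) \otimes {\bf n}(\phi({\bf x}))$, which is orthogonal. The rotational invariance of $\nu$ and $\nu^0$ then guarantees that any Poisson random measure whose atoms are pointwise transformed by a predictable orthogonal state-dependent map retains its original intensity $\d t\, \nu(\d z)$ (respectively $\d t\, \nu^0(\d z)$). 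A martingale-problem argument on test functions $\varphi \in C_c^2(\R^d)$, combined with the strong well-posedness of \eqref{EW2} under $({\bf A}_1)$ (see e.g.\ \cite[Theorem 1.1]{BLW}), then identifies $\tilde{\bf X}^{n,n}_{[0,T]}$ as having the law of ${\bf X}^{n,n}_{[0,T]}$.

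The main obstacle lies precisely in step (iii): for any fixed $\vv > 0$, the matrix $\Pi_{\vv, d}$ fails to be orthogonal on the transition annulus $\{\rho({\bf x}) \in (\vv, 2\vv)\}$, so ${\bf X}^{n,n,\vv}_{[0,T]}$ does \emph{not} share the law of ${\bf X}^{n,n}_{[0,T]}$ for any positive $\vv$. This is why the construction in \eqref{EW3} yields only an approximate coupling and the genuine coupling emerges solely in the weak limit. Controlling the contribution of this vanishing transition region — and, in parallel, verifying that the state-dependent truncation threshold $\tfrac{1}{2|\si|}|Z_t^{i,n,\vv}|$ combines smoothly with the $\Pi_{\vv,d}$-rotation to preserve the compensator $\d t\,\nu(\d z)$ under the weak limit — is the technical heart of the argument.
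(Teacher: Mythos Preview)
Your proposal is correct and follows essentially the same route as the paper: tightness via a uniform first-moment bound (using $|\Pi_{\vv,d}({\bf x})z|\le|z|$) and Aldous' criterion, extraction of a subsequence by Prokhorov, and identification of the second marginal through the martingale problem, where the rotational invariance of $\nu,\nu^0$ forces the generator discrepancy coming from $\Pi_{\vv,d}$ to vanish as $\vv\to0$ and the weak uniqueness of \eqref{EW2} closes the argument. The paper carries out the last step by writing the generator difference explicitly and taking pointwise limits with dominated convergence rather than invoking a Skorokhod representation, but the substance is the same; one minor correction is that the test functions live in $C_b^2((\R^d)^n)$, not $C_c^2(\R^d)$, since you are running the martingale problem for the full vector ${\bf X}^{n,n}$.
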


In order to examine the tightness of $({\bf X}^{n,n,\vv}_{[0,T]})_{\vv>0}$, it is primary to demonstrate that $({\bf X}_{[0,T]}^{n,n,\vv})_{\vv>0} $
 has a  uniform
  moment with regard to the parameter  $\vv$, which is claimed in the subsequent  lemma.

\begin{lemma}\label{uniform}
Fix $n\ge1$ and $T>0$.
Suppose Assumption $({\bf A1})$ holds and further $ \E| X_0^{1,n}|<\8.$
Then,  there is a constant $C_{T}>0$
$($which is independent of $n$$)$
such that for any $\vv>0$,
\begin{equation}\label{E7}
 \E\Big(\sup_{0\le t\le T}|{\bf X}_t^{n,n,\vv}|\Big)\le C_{T}n\big(1+\E| X_0^{1,n}|\big).
\end{equation}
\end{lemma}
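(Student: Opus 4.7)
The plan is to apply It\^o's formula to the Lyapunov function $V(x):=(1+|x|^2)^{1/2}$ along each $X_t^{i,n,\vv}$ and then sum over $i$ to derive a closed estimate for $S_t:=\sum_{i=1}^nV(X_t^{i,n,\vv})$. The crucial ingredient that keeps every estimate uniform in $\vv>0$ is the pointwise inequality $|\Pi_{\vv,d}(\mathbf{x})z|\le|z|$ valid for all $\mathbf{x}\in(\R^d)^n$ and $z\in\R^d$, which follows at once from $\Pi_{\vv,d}(\mathbf{x})=I_d-2h_\vv(\rho(\mathbf{x}))\mathbf{n}(\phi(\mathbf{x}))\otimes\mathbf{n}(\phi(\mathbf{x}))$ together with $h_\vv\in[0,1]$.

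First I would handle the drift term: by Assumption $({\bf A}_1)$, a computation analogous to \eqref{ET} yields
\begin{align*}
\<\nn V(X_t^{i,n,\vv}),b(X_t^{i,n,\vv},\hat\mu_t^{n,\vv})\>\le L_1V(X_t^{i,n,\vv})+L_2\hat\mu_t^{n,\vv}(|\cdot|)+|b({\bf0},\dd_{\bf0})|.
\end{align*}
For the jump contributions produced by It\^o's formula, I would split each Poisson integral according to whether $|z|\le 1$ (compensated part of $\overline{N}^i$ and $\overline{N}^0$) or $|z|>1$ (uncompensated part), and separately according to whether $|z|$ lies below or above the state-dependent threshold $\ff1{2|\si|}|Z^{i,n,\vv}_\cdot|$. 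On the small-jump regions ($|z|\le 1$) the second-order Taylor expansion of $V$ together with $|\nn^2V|\le C$ and $|\Pi_{\vv,d}z|\le|z|$ furnishes a drift correction bounded by $C\si^2|z|^2$, integrable against $\nu$ by the first part of \eqref{vv0}; on the large-jump regions ($|z|>1$) the crude bound $|V(x+J)-V(x)|\le|J|\le|\si||z|$ is integrable against $\nu$ by the second part of \eqref{vv0}. Completely analogous estimates apply to the common-noise jumps governed by $\nu^0$.

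Combining these estimates, one arrives at an inequality of the form
\begin{align*}
V(X_t^{i,n,\vv})\le V(X_0^{i,n,\vv})+Ct+\int_0^t\big[L_1V(X_s^{i,n,\vv})+L_2\hat\mu_s^{n,\vv}(|\cdot|)\big]\,\d s+M_t^i,
\end{align*}
where $M_t^i$ is a local martingale whose compensated small-jump part is controlled in $L^1$ via the Burkholder--Davis--Gundy inequality applied to its quadratic variation (bounded by $Ct\int_{|z|\le1}|z|^2\,\nu(\d z)$, uniformly in $\vv$), while the compensated large-jump part is controlled by its total variation (bounded in expectation by $Ct\int_{|z|>1}|z|\,\nu(\d z)$). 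Summing over $i=1,\cdots,n$, using $\hat\mu_s^{n,\vv}(|\cdot|)\le\ff1n\sum_{j=1}^nV(X_s^{j,n,\vv})$, taking $\E\sup_{s\le t}$, and invoking Gr\"onwall's inequality then yields $\E\sup_{s\le T}S_s\le C_Tn(1+\E|X_0^{1,n}|)$, which implies \eqref{E7} since $|\mathbf{X}_t^{n,n,\vv}|\le S_t$ up to a universal constant.

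The main obstacle I anticipate is the careful bookkeeping of the four jump regimes above: the threshold $\ff1{2|\si|}|Z^{i,n,\vv}_\cdot|$ is state-dependent and may itself be arbitrarily large or small, so one must verify that regardless of which side of the threshold $|z|$ lies on, the integrand admits an $\vv$- and $\mathbf{X}$-independent bound that is integrable against $\nu$ (resp.\ $\nu^0$) by \eqref{vv0}. A secondary technical point is to control the $L^1$ supremum of the large-jump martingale without relying on its quadratic variation, for which a finite second moment of $\nu$ is not assumed; the finite total variation estimate substitutes effectively here.
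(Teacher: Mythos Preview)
Your proposal is correct and follows essentially the same route as the paper's own proof: apply It\^o's formula to $V(x)=(1+|x|^2)^{1/2}$, bound the drift via $({\bf A}_1)$, control the jump compensators using $|\Pi_{\vv,d}z|\le|z|$ (the paper uses the equivalent $\|\Pi_{\vv,t}\|_{\rm HS}^2\le d$), split the martingale into small- and large-jump pieces handled respectively by BDG and by total variation, then sum over $i$ and close with Gr\"onwall. Your anticipation of the state-dependent threshold bookkeeping and of the need to avoid second moments of $\nu$ for large jumps matches exactly the care taken in the paper.
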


\begin{proof}
As in the proof of Proposition \ref{pro1}, we still write $V(x)=(1+|x|^2)^{\frac{1}{2}}, x\in\R^d.$
Note that  for any $x,y,z\in\R^d$,
\begin{align*}
V(x+y\I_{\{|z|\le1\}})+V(x+y\I_{\{|z|>1\}})-V(x)=V(x+y)-V(x).
\end{align*}
Then, applying It\^o's formula yields that
\begin{align*}
&\d V(X_t^{i,n,\vv})\\
&=\<\nn V(X_t^{i,n,\vv}),b(X_t^{i,n,\vv},\hat\mu_t^{n,\vv})\>\,\d t  +\d M_t^{i,n,\vv}\\
 &\quad   +\int_{\{|z|<\frac{1}{2|\si|}|Z_t^{i,n,\vv}|\}}\big[V(X_t^{i,n,\vv}+\si\Pi_{\vv,t}z)-V(X_t^{i,n,\vv})
 -\si\<\nn V(X_t^{i,n,\vv}),\Pi_{\vv,t}z\>\I_{\{|z|<1\}}\big]\,\nu(\d z)\d t\\
 &\quad   +\int_{\{|z|\ge\frac{1}{2|\si|}|Z_t^{i,n,\vv}|\}}\big[V(X_t^{i,n,\vv}+\si z)-V(X_t^{i,n,\vv})
 -\si\<\nn V(X_t^{i,n,\vv}), z\>\I_{\{|z|<1\}}\big]\,\nu(\d z)\d t\\
 &\quad   +\int_{\{|z|<\frac{1}{2|\si_0|}|Z_t^{i,n,\vv}|\}}\big[V(X_t^{i,n,\vv}+\si_0\Pi_{\vv,t}z)
 -V(X_t^{i,n,\vv})-\si_0\<\nn V(X_t^{i,n,\vv}),\Pi_{\vv,t}z\>\I_{\{|z|<1\}}\big]\,\nu^0(\d z)\d t\\
 &\quad   +\int_{\{|z|\ge\frac{1}{2|\si_0|}|Z_t^{i,n,\vv}|\}}\big[V(X_t^{i,n,\vv}+\si_0z)
 -V(X_t^{i,n,\vv})-\si_0\<\nn V(X_t^{i,n,\vv}),z\>\I_{\{|z|<1\}}\big]\,\nu^0(\d z)\d t,
\end{align*}
where $\Pi_{\vv,t}:=\Pi_{\vv,d}({\bf Z}_t^{n,\vv})
$,
and
\begin{equation}\label{E9}
\begin{split}
M^{i,n,\vv}_t
: &=  \bigg(\int_0^t\int_{\{|z|<\frac{1}{2|\si|}|Z_s^{i,n,\vv}|\}}\big[V(X_s^{i,n,\vv}+\si\Pi_{\vv,s}z )-V(X_s^{i,n,\vv})\big]\,\wt N^i(\d z,\d s)
\\
 &\quad   +\int_0^t\int_{\{|z|\ge\frac{1}{2|\si|}|Z_s^{i,n,\vv}|\}}\big[V(X_s^{i,n,\vv}+\si z)-V(X_s^{i,n,\vv})\big]\,\wt N^i(\d z,\d s)
 \bigg)\\
 &\quad   +\bigg(\int_0^t\int_{\{|z|<\frac{1}{2|\si_0|}|Z_s^{i,n,\vv}|\}}\big[V(X_s^{i,n,\vv}+\si_0\Pi_{\vv,s}z )-V(X_s^{i,n,\vv})\big]\,\wt N^0(\d z,\d s)\\
 &\quad   +\int_0^t\int_{\{|z|\ge\frac{1}{2|\si_0|}|Z_s^{i,n,\vv}|\}}\big[V(X_s^{i,n,\vv}+\si_0z)-V(X_s^{i,n,\vv})\big]\,\wt N^0(\d z,\d s)\bigg)\\
 &=: \Theta^{i,n,\vv}_t+\bar\Theta^{i,n,\vv}_t.
\end{split}
\end{equation}
Next, by repeating the strategy to derive \eqref{ET4} and using the fact that
$\|\Pi_{\vv,t}\|_{\rm HS}^2\le d$, there exists a constant $c_1>0$ such that
\begin{align*}
\d V(X_t^{i,n,\vv})
 \le c_1\big(1+|X_t^{i,n,\vv}|+\hat{\mu}_t^{n,\vv}(|\cdot|)\big)\,\d t+  \d M_t^{i,n,\vv}.
\end{align*}

 Apparently, one has
\begin{equation*}
\begin{split}
\Theta^{i,n,\vv}_t&=\int_0^t\int_{\{|z|<1\wedge(\frac{1}{2|\si|}|Z_s^{i,n,\vv}|)\}}\big[V(X_s^{i,n,\vv}+\si\Pi_{\vv,s}z )-V(X_s^{i,n,\vv})\big]\,\wt N^i(\d z,\d s)\\
&\quad+\int_0^t\int_{\{ 1\wedge(\frac{1}{2|\si|}|Z_s^{i,n,\vv}|)\le|z|\le \frac{1}{2|\si|}|Z_s^{i,n,\vv}|\}}\big[V(X_s^{i,n,\vv}+\si\Pi_{\vv,s}z )-V(X_s^{i,n,\vv})\big]\, N^i(\d z,\d s)\\
&\quad+\int_0^t\int_{\{ 1\wedge(\frac{1}{2|\si|}|Z_s^{i,n,\vv}|)\le|z|\le \frac{1}{2|\si|}|Z_s^{i,n,\vv}|\}}\big[V(X_s^{i,n,\vv}+\si\Pi_{\vv,s}z )-V(X_s^{i,n,\vv})\big]\,\nu(\d z) \d s\\
&\quad+\int_0^t\int_{\{|z|\ge1\vee(\frac{1}{2|\si|}|Z_s^{i,n,\vv}|)\}}\big[V(X_s^{i,n,\vv}+\si z)-V(X_s^{i,n,\vv})\big]\,  N^i(\d z,\d s)\\
&\quad+\int_0^t\int_{\{|z|\ge1\vee(\frac{1}{2|\si|}|Z_s^{i,n,\vv}|)\}}\big[V(X_s^{i,n,\vv}+\si z)-V(X_s^{i,n,\vv})\big]\,\nu(\d z) \d s\\
&\quad+\int_0^t\int_{\{\frac{1}{2|\si|}|Z_s^{i,n,\vv}|\le  |z|<1\vee(\frac{1}{2|\si|}|Z_s^{i,n,\vv}|)\}}\big[V(X_s^{i,n,\vv}+\si z)-V(X_s^{i,n,\vv})\big]\,\wt N^i(\d z,\d s).
\end{split}
\end{equation*}
Thereafter, applying the Burkholder-Davis-Gundy inequality (see, for instance,  \cite[Theorem 1]{MR}) and utilizing the fact that the random measure $N^i(\d z,\d s)$
is nonnegative, we deduce from $\|\nn V\|_\8\le1$ and $\|\Pi_{\vv,t}\|_{\rm HS}^2\le d$ that there exist  constants  $c_2,c_3 >0$ such that
\begin{align}\label{ET5}
&\E\Big(\sup_{0\le s\le t } |\Theta^{i,n,\vv}_s|\Big)\nonumber\\
&\le  c_2\E\bigg(\int_0^t\int_{\{|z|<(\frac{1}{2|\si|}|Z_s^{i,n,\vv}|)\wedge1\}}\big|V (X_s^{i,n,\vv}+\si\Pi_{\vv,s}z)-V (X_s^{i,n,\vv})\big|^2\,\nu(\d z)\d s\bigg)^{1/2}\nonumber\\
&\quad   +c_2\E\bigg(\int_0^t\int_{\{(\frac{1}{2|\si|}|Z_s^{i,n,\vv}|)\wedge1\le |z|<\frac{1}{2|\si|}|Z_s^{i,n,\vv}|\}}\big|V (X_s^{i,n,\vv}+\si \Pi_{\vv,s}z)-V (X_s^{i,n,\vv})\big|\,\nu(\d z)\d s\bigg)\nonumber\\
&\quad+c_2\E\bigg(\int_0^t\int_{\{|z|\ge1\vee(\frac{1}{2|\si|}|Z_s^{i,n,\vv}|)\}}\big|V(X_s^{i,n,\vv}+\si z)-V(X_s^{i,n,\vv})\big|\,\nu(\d z) \d s\bigg)\\
&\quad+c_2\E\bigg(\int_0^t\int_{\{\frac{1}{2|\si|}|Z_s^{i,n,\vv}| \le  |z|<1\vee(\frac{1}{2|\si|}|Z_s^{i,n,\vv}|)\}}\big|V(X_s^{i,n,\vv}+\si z)-V(X_s^{i,n,\vv})\big|^2\,\nu(\d z) \d s\bigg)^{\frac{1}{2}}\nonumber\\
&\le  c_2(1+\sqrt{d})\,|\si|\sqrt{t} \bigg( \int_{\{|z|<1\}} |z|^2\,\nu(\d z) \bigg)^{1/2}
      +c_2(1+\sqrt{d}) |\si| t\int_{\{|z|\ge1\}}|z|\,\nu(\d z) \nonumber\\
&\le  c_3(\sqrt{t}+t),\nonumber
\end{align}
where in the second inequality we used the fact that the events $\{\frac{1}{2|\si|}|Z_s^{i,n,\vv}|\wedge1\le |z|<\frac{1}{2|\si|}|Z_s^{i,n,\vv}|\}$
and  $\{\frac{1}{2|\si|}|Z_s^{i,n,\vv}| \le  |z|<1\vee(\frac{1}{2|\si|}|Z_s^{i,n,\vv}|)\}$ are empty
in case   the events  $\{\frac{1}{2|\si|}|Z_s^{i,n,\vv}|\le 1\}$ and $\{1\le\frac{1}{2|\si|}|Z_s^{i,n,\vv}|\}$
 take  place, respectively, and the last inequality holds true due to   \eqref{vv0}. Next, by following the same line to deduce \eqref{ET5}, we have
\begin{equation*}
 \E\Big(\sup_{0\le s\le t } |\bar\Theta^{i,n,\vv}_s|\Big)  \le  c_4(\sqrt{t}+t).
\end{equation*}
Accordingly, there is a constant $c_5>0$ such that
\begin{align*}
\frac{1}{n}\sum_{i=1}^n\E\Big(\sup_{0\le s\le t }| X_s^{i,n,\vv}|\Big)   &\le \frac{1}{n}\sum_{i=1}^n\E\Big(\sup_{0\le s\le t }V ( X_s^{i,n,\vv})\Big)\\
&\le 1+\frac{1}{n}\sum_{i=1}^n\E|X_0^{i,n,\vv}| +c_5(\sqrt{T}+T)+\frac{c_5}{n}\sum_{i=1}^n\int_0^t  \E \sup_{0\le u\le s }|X_u^{i,n,\vv}|\,\d s.
\end{align*}
Finally, the assertion \eqref{E7}  follows immediately from
 Gr\"onwall's inequality and by noting that
\begin{align*}
\E\Big(\sup_{0\le t\le T}|{\bf X}_t^{n,n,\vv}|\Big)\le \sum_{i=1}^n\E\Big(\sup_{0\le t\le T }| X_t^{i,n,\vv}|\Big).
\end{align*} The proof is therefore complete.
\end{proof}

\begin{lemma}\label{tight}
Fix $n\ge1$ and $T>0$. Suppose Assumption $({\bf A}_1)$ holds and further $\E| X_0^{1,n}|<\8.$  Then,    $({\bf X}^{n,n,\vv}_{[0,T]})_{\vv>0}$ is tight.
\end{lemma}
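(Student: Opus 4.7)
The plan is to verify Aldous's tightness criterion in the Skorokhod space $D([0,T];(\R^d)^n)$. Because Lemma \ref{uniform} supplies a moment bound on $\sup_{0\le t\le T}|{\bf X}_t^{n,n,\vv}|$ that is uniform in $\vv>0$, Markov's inequality already delivers tightness of the one-dimensional marginals $(\scr L_{{\bf X}^{n,n,\vv}_t})_{\vv>0}$ for every $t\in[0,T]$. It therefore remains to check the oscillation condition
\[
\lim_{\dd\downarrow0}\limsup_{\vv\downarrow0}\sup_{\tau\in\mcr T_T^\dd}\P\big(\big|{\bf X}^{n,n,\vv}_{(\tau+\dd)\wedge T}-{\bf X}^{n,n,\vv}_\tau\big|>\eta\big)=0,\qquad\eta>0,
\]
where $\mcr T_T^\dd$ denotes the collection of $\mathbb F$-stopping times bounded by $T-\dd$.

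Fix $\tau\in\mcr T_T^\dd$ and $1\le i\le n$. I would decompose the $i$-th component of the increment ${\bf X}^{n,n,\vv}_{(\tau+\dd)\wedge T}-{\bf X}^{n,n,\vv}_\tau$ via \eqref{EW3} into a drift term, the reflection-coupled small-jump martingale contributions driven by $\ol N^i$ and $\ol N^0$, and the unreflected large-jump integrals. By \eqref{E1}--\eqref{E2} the drift has linear growth, so Lemma \ref{uniform} gives
\[
\E\int_\tau^{(\tau+\dd)\wedge T}\big|b(X^{i,n,\vv}_s,\hat\mu_s^{n,\vv})\big|\,\d s\le C\dd\big(1+\E|X_0^{1,n}|\big),
\]
uniformly in $\vv$. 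For the jump parts I would repeat, word for word, the splitting used in \eqref{ET5}: inserting the deterministic level $|z|=1$ into the $\vv$-dependent random domains isolates a compensated small-jump martingale, to which the Burkholder-Davis-Gundy inequality yields an $O(\ss\dd)$ bound, and a large-jump piece that is directly estimated by $C\dd\int_{\{|z|>1\}}|z|\,(\nu+\nu^0)(\d z)$, finite thanks to \eqref{vv0}. The approximate reflection intervenes only through $\Pi_{\vv,d}$, whose Hilbert-Schmidt norm obeys $\|\Pi_{\vv,d}({\bf x})\|_{\rm HS}^2\le d$ independently of $\vv$ and ${\bf x}$, so all three estimates are genuinely $\vv$-free.

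Combining these pieces produces a bound of the form $\E|{\bf X}^{n,n,\vv}_{(\tau+\dd)\wedge T}-{\bf X}^{n,n,\vv}_\tau|\le C(\ss\dd+\dd)$ with a constant $C$ that is uniform in both $\vv>0$ and $\tau\in\mcr T_T^\dd$, and Markov's inequality closes the Aldous criterion. Combined with the marginal tightness established above, this is exactly the standard semimartingale criterion for tightness in the Skorokhod topology (see, e.g., Jacod--Shiryaev or Billingsley). The main technical point I anticipate is the careful handling of the random truncation thresholds $\tfrac{1}{2|\si|}|Z^{i,n,\vv}_s|$ and $\tfrac{1}{2|\si_0|}|Z^{i,n,\vv}_s|$ appearing in \eqref{EW3}: left unreduced, these $\vv$-dependent domains would superficially couple the jump estimates to the solution ${\bf X}^{n,n,\vv}$ itself; once the splitting trick from \eqref{ET5} separates the regime $|z|\le 1$ from $|z|>1$, the integrals collapse onto $|z|^2\wedge|z|$ against $\nu$ and $\nu^0$ and all the $\vv$-dependence is purged.
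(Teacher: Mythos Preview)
Your overall strategy coincides with the paper's: both arguments invoke Aldous's criterion, obtain marginal tightness from Lemma~\ref{uniform} and Chebyshev, and then control increments over short stochastic intervals by splitting into drift plus the four jump integrals, with the latter handled via $\|\Pi_{\vv,d}\|_{\rm HS}^2\le d$ and the $|z|\wedge|z|^2$ integrability \eqref{vv0}.

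There is, however, a genuine gap in your treatment of the drift. You assert that ``by \eqref{E1}--\eqref{E2} the drift has linear growth'', and then feed the moment bound from Lemma~\ref{uniform} into $\E\int_\tau^{\tau+\dd}|b(X^{i,n,\vv}_s,\hat\mu^{n,\vv}_s)|\,\d s$. But \eqref{E1} is only a \emph{one-sided} Lipschitz condition in the spatial variable; it does not control $|b(x,\mu)|$ in terms of $|x|$. (Think of $b(x)=-x^3$, which satisfies \eqref{E1} with $L_1=0$ yet grows cubically.) Assumption $({\bf A}_1)$ merely gives continuity, hence local boundedness, of $b(\cdot,\delta_{\bf 0})$, so your $L^1$ bound on the drift increment is not available and the $C(\ss\dd+\dd)$ estimate breaks down.

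The paper repairs exactly this point by a localization argument: for any small $\vv_0>0$ one chooses $R_0^*$ so large that $\P\big(\sup_{0\le t\le T+1}|{\bf X}^{n,n,\vv}_t|\ge R_0^*\big)\le\vv_0$ (via Lemma~\ref{uniform}), introduces the stopping time $\tau_0^{n,\vv}=\inf\{t\ge0:|{\bf X}^{n,n,\vv}_t|>R_0^*\}$, and on the event $\{\tau_0^{n,\vv}>T+1\}$ bounds $|b(X^{i,n,\vv}_s,\delta_{\bf 0})|$ by $\sup_{|x|\le R_0^*}|b(x,\delta_{\bf 0})|<\infty$. The measure part is handled separately through \eqref{E2} and the first-moment bound. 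This gives convergence to zero of the drift contribution \emph{in probability}, not in $L^1$, which is all Aldous requires. Your jump estimates are essentially correct and match the paper's.
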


\begin{proof}
Below, we fix $n\ge1,T>0$,   and  write $D([0,T];\R^d)$
as the space of functions $f:[0,T]\rightarrow\R^d$
that are right-continuous and have left-hand limits. It is
 obvious that  ${\bf X}^{n,n,\vv}_{[0,T]}\in D([0,T];\R^d)$ for any  $\vv>0.$ As we know,
one of the   methods to examine  tightness of the $D([0,T];\R^d)$-valued stochastic processes is Aldous's criterion; see, for example,  \cite[Theorem 1]{Aldous}. Accordingly,    to show the tightness of
 $({\bf X}^{n,n,\vv}_{[0,T]})_{\vv>0}$, it is sufficient to demonstrate the following statements:
\begin{enumerate}
\item[(i)] for each $t\in[0,T]$, $({\bf X}^{n,n,\vv}_t)_{\vv>0}$ is tight;

\item[(ii)] ${\bf X}^{n,n,\vv}_{\tau_\vv+\delta_\vv}- {\bf X}^{n,n,\vv}_{\tau_\vv}\to 0$ in probability as $\vv\to0$, where, for each $\vv>0$, $ \tau_\vv\in[0,T] $ is a stopping time and  $\delta_\vv\in[0,1]$ is a constant such that  $\delta_\vv\to0$ as $\vv\to0.$
\end{enumerate}
Indeed,  the statement (i) is provable  by taking Lemma \ref{uniform} and Chebyshev’s inequality into account.  So, in the sequel, it remains to   verify the statement  (ii).

From \eqref{EW3}, it is easy to see that for any $\beta>0, $
\begin{align*}
\P\big(\big|{\bf X}^{n,n,\vv}_{\tau_\vv+\delta_\vv}- {\bf X}^{n,n,\vv}_{\tau_\vv}\big|\ge \beta\big)
&\le \sum_{i=1}^n \Bigg(\P\bigg(\int_{\tau_\vv}^{\tau_\vv+\delta_\vv}\big|b(  X_s^{i,n,\vv},\hat{ \mu}_s^{n,\vv})\big|\,\d s\ge \frac{\beta}{5n}\bigg) \\
&\quad\quad\quad\quad +\P\bigg( |\si| \bigg|\int_{\tau_\vv}^{\tau_\vv+\delta_\vv}\int_{\{|z|<\frac{1}{2|\si|}|Z_s^{i,n,\vv}|\}}\Pi_{\vv,s}\cdot z\,\ol N^i(\d z,\d s)\bigg|\ge \frac{\beta}{5n}\bigg)\\
&\quad\quad\quad\quad +\P\bigg( |\si| \bigg|\int_{\tau_\vv}^{\tau_\vv+\delta_\vv}\int_{\{|z|\ge\frac{1}{2|\si|}|Z_s^{i,n,\vv}|\}}z\,\ol N^i(\d z,\d s)\bigg|\ge \frac{\beta}{5n}\bigg)\\
&\quad\quad\quad\quad +\P\bigg( |\si_0| \bigg|\int_{\tau_\vv}^{\tau_\vv+\delta_\vv}\int_{\{|z|<\frac{1}{2|\si_0|}|Z_s^{i,n,\vv}|\}}\Pi_{\vv,s}\cdot z\,\ol N^0(\d z,\d s)\bigg|\ge \frac{\beta}{5n}\bigg)\\
&\quad\quad \quad\quad+\P\bigg( |\si_0| \bigg|\int_{\tau_\vv}^{\tau_\vv+\delta_\vv}\int_{\{|z|\ge\frac{1}{2|\si_0|}|Z_s^{i,n,\vv}|\}}z\,\ol N^0(\d z,\d s)\bigg|\ge \frac{\beta}{5n}\bigg)\Bigg)\\
&=:\sum_{i=1}^n\sum_{j=1}^5 \Gamma^{j,\vv}_i.
\end{align*}

By leveraging Chebyshev's inequality and \eqref{E7}, it follows that  for any $R_0>0$,
\begin{equation*}
\P\bigg(\sup_{0\le t\le T+1}|{\bf X}_t^{n,n,\vv}|\ge R_0\bigg)\le \frac{1}{R_0 }C_{T+1}n\big(1+\E| X_0^{1,n}|
 \big).
\end{equation*}
This implies that,  for any $\vv_0>0,$   there exists an  $R_0^*=R_0^*(\vv_0)>0$ such that
\begin{align}\label{E10}
\P\bigg(\sup_{0\le t\le T+1}|{\bf X}_t^{N,N,\vv}|\ge R_0^*\bigg)\le \vv_0.
\end{align}
With the quantity $R_0^*$ above at hand, we define the following
  the stopping time
\begin{equation*}
\tau_0^{n,\vv}=\inf\big\{t\ge0: |{\bf X}_t^{n,n,\vv}|> R_0^*\big\}.
\end{equation*}
Subsequently,  we find from \eqref{E2}  that
\begin{align*}
 \Gamma^{1,\vv}_i&\le\P\bigg(\int_{\tau_\vv}^{\tau_\vv+\delta_\vv}\big|b(  X_s^{i,n,\vv},\hat{ \mu}_s^{n,\vv})-b(  X_s^{i,n,\vv},\delta_{\bf0})\big|\,\d s\ge \frac{\beta}{10n}\bigg)\\
 &\quad+\P\bigg(\int_{\tau_\vv}^{\tau_\vv+\delta_\vv}\big|b(  X_s^{i,n,\vv},\delta_{\bf0})\big|\,\d s\ge \frac{\beta}{10n}\bigg)\\
 &\le \P\bigg( \int_{\tau_\vv}^{\tau_\vv+\delta_\vv}\mathbb W_1(\hat{ \mu}_s^{n,\vv},\delta_{\bf0}) \,\d s\ge \frac{\beta}{10nL_2}\bigg)+ \P\big( \tau_0^{n,\vv}\le T+1\big)\\
 &\quad+\P\bigg(\int_{\tau_\vv}^{\tau_\vv+\delta_\vv}\big|b(  X_s^{i,n,\vv},\delta_{\bf0})\big|\,\d s\ge \frac{\beta}{10n},\tau_0^{n,\vv}> T+1\bigg)\\
 &\le  \P\bigg( \frac{1}{n}\sum_{j=1}^n\int_{\tau_\vv}^{\tau_\vv+\delta_\vv}|X_s^{j,n,\vv}| \,\d s\ge \frac{\beta}{10nL_2}\bigg)+ \P\bigg(\sup_{0\le t\le T+1}|{\bf X}_t^{n,n,\vv}|\ge R_0^*\bigg)\\
 &\quad+\P\bigg(\int_{\tau_\vv}^{\tau_\vv+\delta_\vv}\I_{[0,\tau_0^{n,\vv})}(s)\big|b(  X_s^{i,n,\vv},\delta_{\bf0} )\big|\,\d s\ge \frac{\beta}{10n} \bigg).
\end{align*}
Thereby,  $ \lim_{\vv\downarrow0}\Gamma^{1,\vv}_i=0 $ is available by recalling that $b(\cdot,\delta_{\bf0})$ is   locally bounded on $\R^d$ (see Assumption $({\bf A}_1)$) and making use of   \eqref{E7}, \eqref{E10} as well as  $\lim_{\vv\downarrow0}\delta_\vv=0$.

Next, applying Chebyshev's inequality followed by It\^o's isometry yields that
\begin{align*}
 \Gamma^{2,\vv}_i&\le\P\bigg( \bigg|\int_{\tau_\vv}^{\tau_\vv+\delta_\vv}\int_{\{|z|<\frac{1}{2|\si|}|Z_s^{i,n,\vv}|\}}\Pi_{\vv,s}\cdot z\I_{\{|z|\le1\}}\tt N^i(\d z,\d s)\bigg|\ge \frac{\beta}{10n |\si|}\bigg)\\
 &\quad+\P\bigg( \bigg|\int_{\tau_\vv}^{\tau_\vv+\delta_\vv}\int_{\{|z|<\frac{1}{2|\si|}|Z_s^{i,n,\vv}|\}}\Pi_{\vv,s}\cdot z\I_{\{|z|>1\}}  N^i(\d z,\d s)\bigg|\ge \frac{\beta}{10n |\si|}\bigg)\\
 &\le \frac{100n^2\si^2}{\beta^2}\E\bigg|\int_{\tau_\vv}^{\tau_\vv+\delta_\vv}\int_{\{|z|<\frac{1}{2|\si|}|Z_s^{i,n,\vv}|\}}\Pi_{\vv,s}\cdot z\I_{\{|z|\le1\}}\tt N^i(\d z,\d s)\bigg|^2\\
 &\quad+\frac{10n |\si|}{\beta}\E\bigg|\int_{\tau_\vv}^{\tau_\vv+\delta_\vv}\int_{\{|z|<\frac{1}{2|\si|}|Z_s^{i,n,\vv}|\}}\Pi_{\vv,s}\cdot z\I_{\{|z|>1\}}  N^i(\d z,\d s)\bigg|\\
 &\le \frac{100n^2\si^2}{\beta^2}\E\bigg(\int_{\tau_\vv}^{\tau_\vv+\delta_\vv}\int_{\{|z|\le 1\}}|\Pi_{\vv,s}\cdot z|^2\nu (\d z) \d s\bigg) \\
 &\quad+\frac{10n |\si|}{\beta}\E\bigg(\int_{\tau_\vv}^{\tau_\vv+\delta_\vv}\int_{\{|z|>1\}}|\Pi_{\vv,s}\cdot z|  \nu(\d z)\d s\bigg).
\end{align*}
This, along with $\|\Pi_{\vv,t}\|_{\rm HS}^2\le d$, \eqref{vv0},  and $\lim_{\vv\downarrow0}\delta_\vv=0$, leads to
$ \lim_{\vv\downarrow0}\Gamma^{2,\vv}_i=0 $. In the same way, we can conclude that
$
\sum_{j=2}^5\lim_{\vv\downarrow0}\Gamma^{j,\vv}_i=0
$.  Consequently, based on the previous analysis,
 the statement (ii) is verifiable.
\end{proof}

Before we move forward to start the proof of Proposition  \ref{pro1},  we introduce some additional notation.  Denote $\mathscr D_\8= D([0,\8);(\R^{d})^n)$ the family  of
 functions $\psi:[0,\8)\to (\R^{d})^n$
  that are right-continuous and have left-hand limits,
  and write $\pi: \mathscr D_\8
  \to (\R^{d})^n$  as
  the projection operator, which is defined     by $\pi_t\psi=\psi(t)$ for $\psi\in \mathscr D_\8
  $
  and $t\ge0.$ In addition, we set
 $\mathcal F_t:=\si(\pi_s:s\le t)$, i.e.,  the $\sigma$-algebra on $\mathscr D_\8$
 induced by the projections $(\pi_s)_{s\in[0,t]}$.

\  \

With Lemma  \ref{tight} at hand,  the proof of Proposition \ref{pro2} can be finished.
\begin{proof}[Proof of Proposition  \ref{pro2}]
Lemma \ref{tight}, besides the Prohorov theorem, implies that, for fixed $n\ge1$ and $T>0$,   $({\bf X}^{n}_{[0,T]},{\bf X}^{n,n,\vv}_{[0,T]})_{\vv>0}$ has a weakly convergent  subsequence, written as  $({\bf X}^{n}_{[0,T]},{\bf X}^{n,n,\vv_l}_{[0,T]})_{l\ge 0} $,
with the  corresponding  weak limit, denoted by $({\bf X}^{n}_{[0,T]},\tilde{{\bf X}}^{n,n}_{[0,T]}) $, in which   $(\vv_l)_{l\ge0}$ is a sequence satisfying  $\lim_{l\to \infty}\vv_l=0.$
In order to  demonstrate  that
$({\bf X}^{n}_{[0,T]},\tilde{{\bf X}}^{n,n}_{[0,T]}) $ is the desired  coupling process associated with   ${\bf X}^{n}_{[0,T]}$ and ${\bf X}^{n,n}_{[0,T]}$, it is sufficient to examine  $
\mathscr L_{\tilde{{\bf X}}^{n,n}}=\mathscr L_{{\bf X}^{n,n}},
$ where $\mathscr L_{\tilde{{\bf X}}^{n,n}}$ and $\mathscr L_{{\bf X} ^{n,n}}$ stands respectively  for  the infinitesimal generators of $(\tilde{\bf X}_t^{n,n})_{t\ge0}$ and  $({\bf X}_t^{n,n})_{t\ge0}$.
Note that
for $f\in C_b^2((\R^d)^n)$ and ${\bf x}:=(x^1,\cdots,x^n)\in  (\R^d)^n$,
\begin{equation*}
\begin{split}
\big(\mathscr L_{{\bf X} ^{n,n}}f\big)({\bf x}) =\sum_{i=1}^n\Big(&\<\nn_if( {\bf x} ),  b(x^i,\hat \mu_{{\bf x}}^n)\>
+\int_{\R^d}\big(f({\bf x}+\si s_i(z)  )-f({\bf x})-\si\<\nn_if({\bf x}),  z\>\I_{\{|z|<1\}}\big)\,\nu(\d z)\\
&+\int_{\R^d}\big(f({\bf x}+\si_0 s_i(z) )-f({\bf x})-\si_0\<\nn_if({\bf x}), z\>\I_{\{|z|<1\}}\big)\,\nu^0(\d z)\Big),
\end{split}
\end{equation*}
where $\hat\mu_{\bf x}^n:=\frac{1}{n}\sum_{j=1}^n\delta_{x^j}$, $\nn_i$ is the first-order gradient operator with respect to the $x^i$-component, and $s_i(z):=({\bf0},\cdots, z,\cdots,{\bf 0})$, i.e., the $i$-th component of $({\bf0},\cdots, {\bf 0},\cdots,{\bf 0})$ is replaced by the vector $z\in\R^d.$

For any $f\in C^2_b((\R^{d})^n)$, define the quantity
\begin{align*}
M_t^{n,f}=f(\tilde{{\bf X}}^{n,n}_t)-f(\tilde{{\bf X}}^{n,n}_0)-\int_0^t\big(\mathscr L_{{\bf X} ^{n,n}}f\big)(\tilde{{\bf X}}^{n,n}_s)\,\d s.
\end{align*}
Provided that for any $t\ge s\ge0$ and $\mathcal F_s$-measurable bounded continuous functional $F:\mathscr D_\8\to\R$,
\begin{equation}\label{E11}
\E\big(M_t^{n,f}F(\tilde{{\bf X}}^{n,n})\big)=\E\big(M_s^{n,f}F(\tilde{{\bf X}}^{n,n})\big),
\end{equation}
that is to say, $(M_t^{n,f})_{t\ge0}$ is a martingale with respect to the filtration $(\mathcal F_t)_{t\ge0}$,
we then can conclude  that
 $\mathscr L_{\tilde{{\bf X}}^{n,n}}=\mathscr L_{{\bf X}^{n,n}}$  by the aid of  the weak uniqueness of \eqref{EW2}.

In the sequel,  we aim at proving  \eqref{E11}.
For  ${\bf x}\in(\R^{d})^n$, let $\mathscr L^{n,\vv}_{{\bf x}}$ be the infinitesimal generator of $({\bf X}_t^{n,n,\vv})_{t\ge0}$ based on the prerequisite that
  the Markov process $( {\bf X}_t^{n,n })_{t\ge0}$ is given. Via a direct calculation, the relationship between $\mathscr L^{n,\vv}_{{\bf x}}$ and $\mathscr L_{{\bf X} ^{n,n}}$ can be given as below:
  for given ${\bf x}\in(\R^{d})^n$ and  any   $f\in C^2_b((\R^{d})^n)$ and ${\bf y}\in(\R^{d})^n$,
\begin{equation}\label{E12}
\begin{split}
&\big(\mathscr L^{n,\vv}_{{\bf x}}f\big)({\bf y})\\
&=\big(\mathscr L_{{\bf X} ^{n,n}}f\big)({\bf y})\\
&\quad  -\sum_{i=1}^n\int_{\{|z|<\frac{1}{2|\si|}|z^i|\}}\big(f({\bf y}+\si s_i(z))-f({\bf y})-\si\<\nn_if({\bf y}),  z\>\I_{\{|z|<1\}}\\
&\quad  -\big(f({\bf y}+\si  s_i(\Pi_{\vv,d}
({\bf x}-{\bf y})z))-f({\bf y})-\si\<\nn_if({\bf y}),\Pi_{\vv,d}
({\bf x}-{\bf y}) z\>\I_{\{|z|<1\}}\big)\big)\,\nu(\d z)\\
&\quad  -\sum_{i=1}^n\int_{\{|z|<\frac{1}{2|\si_0|}|z^i|\}}\big(f({\bf y}+\si_0  s_i(z))-f({\bf y})-\si_0\<\nn_if({\bf y}), z\>\I_{\{|z|<1\}}\\
&\quad   -\big(f({\bf y}+\si_0 s_i(\Pi_{\vv,d}
({\bf x}-{\bf y})z) )-f({\bf y})-\si_0\<\nn_if({\bf y}),\Pi_{\vv,d}
({\bf x}-{\bf y}) z\>\I_{\{|z|<1\}}\big)\big)\,\nu^0(\d z)\\
&=:\big(\mathscr L_{{\bf X} ^{n,n}}f\big)({\bf y})-\big(\mathscr L^{n,\vv,\nu}_{{\bf x}}f\big)({\bf y})-\big(\mathscr L^{n,\vv,\nu^0}_{{\bf x}}f\big)({\bf y}),
\end{split}
\end{equation}
 where $z^i:=x^i-y^i.$

Via It\^o's formula, for $f\in C_b^2((\R^{d})^n)$,  we know that
 $(M_t^{n,f,\vv_l})_{t\ge0}$,   defined in the manner of
\begin{align*}
M_t^{n,f,\vv_l} =f({\bf X}^{n,n,\vv_l}_t)-f({\bf X}^{n,n,\vv_l}_0)-\int_0^t\big(\mathscr L^{n,\vv_l}_{{\bf X}^{n}_s}f\big)({\bf X}^{n,n,\vv_l}_s)\,\d s
\end{align*}
is a martingale with respect to $(\mathcal F_t)_{t\ge0}$ so   for any $t\ge s\ge0$ and $\mathcal F_s$-measurable bounded continuous functional $F:\mathscr D_\8\to\R$,
\begin{equation}\label{E13}
\E\big(M_t^{n,f,\vv_l}F( {\bf X}^{n,n,\vv_l})\big)=\E\big(M_s^{n,f,\vv_l}F( {\bf X}^{n,n,\vv_l})\big).
\end{equation}
Apparently,   with the help of  \eqref{E12}, $(M_t^{n,f,\vv_l})_{t\ge0}$ can be reformulated in the form below: for any $t\ge0,$
\begin{align*}
M_t^{n,f,\vv_l}&=f({\bf X}^{n,n,\vv_l}_t)-f({\bf X}^{n,n,\vv_l}_0)-\int_0^t(\mathscr L_{{\bf X} ^{n,n}}f)({\bf X}^{n,n,\vv_l}_s)\,\d s
+\int_0^t \big(\mathscr L^{n,\vv_l,*}_{{\bf X}^{n}_s}f\big)({\bf X}^{n,n,\vv_l}_s)\,\d s.
\end{align*}
Thereby, the  assertion \eqref{E11} can be available   by invoking   \eqref{E12},  applying  the dominated convergence theorem and exploiting the statements to be claimed   that
\begin{equation}\label{E14}
\lim_{\vv\to 0}\big(\mathscr L^{N,\vv,\nu}_{{\bf x}}f\big)({\bf y})=0\quad\mbox{ and } \quad \lim_{\vv\to 0}\big(\mathscr L^{N,\vv,\nu^0}_{{\bf x}}f\big)({\bf y})=0.
\end{equation}

Once  the assertion  $\lim_{\vv\to 0}\big(\mathscr L^{N,\vv,\nu}_{{\bf x}}f\big)({\bf y})=0$ is done,  the proof of  $\lim_{\vv\to 0}\big(\mathscr L^{N,\vv,\nu^0}_{{\bf x}}f\big)({\bf y})=0$ can be established in the same manner. Therefore, in the following analysis, we focus merely on the proof of the former one.
Hereinafter, for brevity, we set for given ${\bf x},{\bf y}\in(\R^d)^n$,
\begin{align*}
\Phi_i(\vv,z)&:=f({\bf y}+\si s_i(z))-f({\bf y})-\si\<\nn_if({\bf y}), z\>\I_{\{|z|\le1\}}\\
&\quad\,\, -\big(f({\bf y}+\si s_i(\Pi_{\vv,d}
({\bf x}-{\bf y})z))-f({\bf y})-\si\<\nn_if({\bf y}),\Pi_{\vv,d}
({\bf x}-{\bf y}) z\>\I_{\{|z|\le1\}}\big).
\end{align*}
Notice that for given  ${\bf y}\in (\R^d)^n$ and any  ${\bf z}\in(\R^d)^n$,
\begin{align*}
 f({\bf y}+{\bf z})-f({\bf y} )
 &= \<\nn f({\bf y}),{\bf z }\>
 +\int_0^1\int_0^s \<\nn^2 f({\bf y}+u{\bf z}){\bf z} , {\bf z}\>\,\d u\,\d s.
\end{align*}
Whence,
we find that
\begin{align*}
&\int_{\{|z|<\frac{1}{2|\si|}|z^i|,|z|\le1\}}\Phi_i(\vv,z)\,\nu(\d z)\\
&=\si^2\int_0^1\int_0^s\int_{\{|z|<\frac{1}{2|\si|}|z^i|,|z|\le1\}}\Big(\<\nn^2_i f({\bf y}+u \si s_i(z))z , z\> \\
&\quad- \<\nn^2_i f({\bf y}+u \si   s_i( \Pi_{\vv,d}
({\bf x}-{\bf y}) z)) \Pi_{\vv,d}
({\bf x}-{\bf y}) z ,  \Pi_{\vv,d}
({\bf x}-{\bf y}) z\>\Big)\nu(\d z)\d u\d s.
\end{align*}
In terms of the definition of $h_\vv$, it is ready to see that
\begin{align*}
\lim_{\vv\to0}\Pi_{\vv,d}
({\bf x}-{\bf y})=
\begin{cases}
\Pi_d({\bf x}-{\bf y}):=I_d-2 {\bf n}(\phi({\bf x}-{\bf y}))\otimes{\bf n}(\phi({\bf x}-{\bf y})),\quad \mbox{ if }  \rho({\bf x}-{\bf y})\neq0,\\
I_d,~~~~~~~~~~~~~~~~~~~~~~~~~~~~~~~~~~~~~~~~~~~~~~~~~~~~~~~~~~~~~~ \mbox{ if } \rho({\bf x}-{\bf y})=0.
\end{cases}
\end{align*}
This enables us to derive that  for any $u\in[0,1],$
\begin{align*}
&\lim_{\vv\to0}\<\nn^2_i f({\bf y}+u \si   s_i( \Pi_{\vv,d}({\bf x}-{\bf y}) z)) \Pi_{\vv,d}({\bf x}-{\bf y}) z ,  \Pi_{\vv,d}({\bf x}-{\bf y}) z\> \\
&=
\begin{cases}
\<\nn^2_i f({\bf y}+u \si   s_i( \Pi_d({\bf x}-{\bf y}) z)) \Pi_d({\bf x}-{\bf y}) z ,  \Pi_d({\bf x}-{\bf y}) z\>,\quad \mbox{ if }  \rho({\bf x}-{\bf y})\neq0,\\
\<\nn^2_i f({\bf y}+u \si   s_i(   z))   z ,  z\>,~~~~~~~~~~~~~~~~~~~~~~~~~~~~~~~~~~~~~~~~~~~ \mbox{ if }  \rho({\bf x}-{\bf y})=0.
\end{cases}
\end{align*}
Subsequently, applying the dominated convergence theorem  and taking the rotationally invariant property of $\nu(\d z)$ yields that
\begin{align*}
\lim_{\vv\to0}\int_{\{|z|<\frac{1}{2|\si|}|z^i|,|z|\le1\}}\Phi_i(\vv,z)\,\nu(\d z)=0.
\end{align*}

On the other hand, by virtue of
\begin{align*}
& \int_{\{|z|<\frac{1}{2|\si|}|z^i|,|z|>1\}}\Phi_i(\vv,z)\,\nu(\d z)\\
&=\si\int_0^1\int_{\{|z|<\frac{1}{2|\si|}|z^i|,|z|>1\}}\<\nn_if({\bf y}+s\si s_i(z)),z\>\,\nu(\d z)\,\d s\\
 &\quad-\si\int_0^1\int_{\{|z|<\frac{1}{2|\si|}|z^i|,|z|>1\}}\<\nn_if({\bf y}+s\si s_i(\Pi_{\vv,d}({\bf x}-{\bf y})z)),\Pi_{\vv,d}({\bf x}-{\bf y})z\>\,\nu(\d z)\,\d s,
 \end{align*}
 along with the  dominated convergence theorem  and   the rotationally invariant property of $\nu(\d z)$ once more, it follows that
 \begin{align*}
\lim_{\vv\to0}\int_{\{|z|<\frac{1}{2|\si|}|z^i|,|z|>1\}}\Phi_i(\vv,z)\,\nu(\d z)=0.
\end{align*}
 In the end, we conclude that the establishment $\lim_{\vv\to 0}\big(\mathscr L^{N,\vv,\nu}_{{\bf x}}f\big)({\bf y})=0$ is available.
\end{proof}

\section{Proof of Theorem \ref{thm1}}\label{sec3}

This section is devoted to accomplishing the proof of Theorem \ref{thm1}.
In particular, we herein are  concentrated merely in the $1$-dimensional SDE \eqref{EW0}, where $\si,\si_0\neq0$.
The corresponding interpretation why we work on the $1$-dimensional setting will be detailed in Remark \ref{remark}.

To proceed, we show that
  $((X_t^i)_{t>0})_{1\le i\le n} $  determined by    \eqref{EW1} has finite first-order moment  in an infinite-time horizon.

 \begin{lemma}\label{lem1}
Assume that $({\bf H}_1)$ holds with  $\lambda_2>\lambda_3$, and suppose further that $(X_0^i)_{1\le i\le n}$ are i.i.d. $\mathscr F_0$-measurable
random variables such that $\E|X_0^1|<\8$. Then,
 there is a constant $C_0>0$  such that for all $1\le i\le n$,
\begin{align}\label{E16}
 \sup_{t\ge0}\E|X_t^i| \le \E|X_0^1| + C_0.
\end{align}
\end{lemma}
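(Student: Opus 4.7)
The plan is to apply It\^o's formula to the Lyapunov function $V(x):=(1+x^2)^{1/2}$, which was already introduced in the proof of Proposition \ref{pro1}, and then extract from the one-sided dissipativity in $({\bf H}_1)$ a linear differential inequality for $\E V(X_t^i)$ whose coefficient of decay is the strictly positive constant $\lambda_2-\lambda_3$. Recall that $V\in C^2(\R)$ with $\|V'\|_\infty,\|V''\|_\infty\le 1$ and $|x|\le V(x)\le 1+|x|$. The jump contributions of both $\nu$ and $\nu^0$ can be bounded, exactly as in the proof of Proposition \ref{pro1}, by a constant $C_1>0$ depending only on $\sigma,\sigma_0$ and the integrability conditions \eqref{vv0}, whereas the local martingale arising from the compensated Poisson integrals is killed by a standard localization argument before taking expectation.

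The key new ingredient is the drift estimate. Decomposing
\[
X_t^i b(X_t^i,\mu_t^i) = X_t^i\bigl(b(X_t^i,\mu_t^i)-b(0,\mu_t^i)\bigr) + X_t^i\bigl(b(0,\mu_t^i)-b(0,\delta_0)\bigr) + X_t^i b(0,\delta_0),
\]
I would apply \eqref{H1} with $y=0$ to the first summand, \eqref{H2} with $\bar{\mu}=\delta_0$ to the second, and the finiteness of $|b(0,\delta_0)|$ (guaranteed by the continuity assumption in $({\bf H}_1)$) to the third. Combined with the elementary bounds $x^2/V(x)\ge V(x)-1$ and $|x|/V(x)\le 1$, this yields a pointwise estimate of the form
\[
V'(X_t^i)b(X_t^i,\mu_t^i) \le C_2 - \lambda_2 V(X_t^i) + \lambda_3\,\mu_t^i(|\cdot|)
\]
for some constant $C_2$ depending only on $\lambda_1,\lambda_2,\ell_0$ and $|b(0,\delta_0)|$. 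Taking expectation and using the identity $\E\mu_t^i(|\cdot|)=\E|X_t^i|\le\E V(X_t^i)$ already established in \eqref{ET9}, I arrive at the linear integral inequality
\[
\E V(X_t^i) \le \E V(X_0^i) + \int_0^t\bigl[(C_1+C_2)-(\lambda_2-\lambda_3)\,\E V(X_s^i)\bigr]\,\d s.
\]

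Because $\lambda_2>\lambda_3$ by assumption, the associated scalar linear ODE $\dot u=(C_1+C_2)-(\lambda_2-\lambda_3)u$ admits the globally bounded majorant $u(t)\le u(0)+(C_1+C_2)/(\lambda_2-\lambda_3)$, so a Gr\"onwall-type comparison gives $\sup_{t\ge0}\E V(X_t^i)\le \E V(X_0^i)+(C_1+C_2)/(\lambda_2-\lambda_3)$, and \eqref{E16} follows from $|x|\le V(x)\le 1+|x|$ together with the fact that the $(X_0^i)$ are identically distributed, with $C_0:=1+(C_1+C_2)/(\lambda_2-\lambda_3)$. The main technical point (and the only place where the gap $\lambda_2>\lambda_3$ actually bites) is the careful drift decomposition above: without the strict inequality the coefficient $-(\lambda_2-\lambda_3)$ would vanish and one would fall back on the merely exponentially growing bound \eqref{EX} obtained in the proof of Proposition \ref{pro1}.
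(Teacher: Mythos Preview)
Your proposal is correct and follows essentially the same route as the paper: both use the Lyapunov function $V(x)=(1+x^2)^{1/2}$, derive the drift bound $xb(x,\mu)\le(\lambda_1+\lambda_2)|x|^2\I_{\{|x|\le\ell_0\}}-\lambda_2|x|^2+(\lambda_3\mu(|\cdot|)+|b(0,\delta_0)|)|x|$ via exactly your three-term decomposition (this is \eqref{ET8}), reuse the jump estimates \eqref{ET1}--\eqref{ET2} and the identity \eqref{ET9}, and exploit the gap $\lambda_2-\lambda_3>0$. The only cosmetic difference is that the paper applies It\^o's formula to $\e^{(\lambda_2-\lambda_3)t}V(X_t^i)$ directly and reads off the uniform bound, whereas you apply It\^o to $V(X_t^i)$ and invoke a Gr\"onwall comparison afterwards; the two are equivalent.
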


\begin{proof}
In order to establish  \eqref{E16}, it only necessitates to amend the associated details to derive \eqref{EX}, which is concerned with the first-order moment estimate in a finite horizon. Below, we just stress the associated  distinctness.

From   (${\bf H}_1$), it is easy to see that  for all $x\in\R$ and $\mu\in \mathscr P_1(\R)$,
\begin{equation}\label{ET8}
x b(x,\mu)
\le  (\lambda_1+\lambda_2)|x|^2\I_{\{|x|\le\ell_0\}}-\lambda_2|x|^2+(\lambda_3\mu(|\cdot|)+|b(0,\delta_{0})|)|x|.
\end{equation}
Below, we set $\lambda_*:=\lambda_2-\lambda_3 $ and
 write  $V(x)=(1+|x|^2)^{\frac{1}{2}}, x\in\R.$
By applying  It\^o's formula and using \eqref{ET1}, \eqref{ET2} as well as \eqref{ET8}, there exists a constant $c_0>0$ such that
\begin{align*}
\d \big(\e^{  \lambda_* t}V(X_t^i)\big)
&\le\e^{  \lambda_* t}\bigg( \lambda_* V(X_t^i) +\frac{X_t^i}{V (X_t^i)}b(X_t^i, \mu_t^i)   +c_0\int_{\R^d}(|z|^2\wedge|z|)(\nu + \nu^0)(\d z)\bigg)\,\d t+\d M_t^i\\
&\le \e^{  \lambda_*t}\big(  (\lambda_*-\lambda_2) V(X_t^i)  +\lambda_3\mu^i_t(|\cdot|)+c_1 \big)\,\d t +\d M_t^i\\
&\le \e^{  \lambda_*t}\big(  (\lambda_*-\lambda_2) V(X_t^i)  +\lambda_3 \E V(X_t^i) +\lambda_3(\mu^i_t(|\cdot|)-\E|X_t^i|)+c_1\big)\,\d t +\d M_t^i,
\end{align*}
where  $(M_t^i)_{t\ge0}$ is a martingale, and
 $$c_1:= c_0\int_{\R^d}(|z|^2\wedge|z|)(\nu + \nu^0)(\d z)+\lambda_2+(\lambda_1+\lambda_2)\ell_0+|b(0,\delta_{0})|.$$
Subsequently, we deduce from $\E^0\mu^i_t(|\cdot|)=\E\mu^i_t(|\cdot|)=\E|X_t^i|$
(see \eqref{ET9}) and $\lambda_*=\lambda_2-\lambda_3$ that
\begin{equation*}
\E V(X_t^i)
 \le \E V( X_0^i)+ c_1/ {\lambda_*}.
\end{equation*}
This, together with the hypothesis that
$(X_0^i)_{1\le i\le n}$ are i.i.d. $\mathscr F_0$-measurable
random variables,
  implies   the desired  assertion \eqref{E16}.
\end{proof}

Recall that the concrete expression of
the function $\rho:(\R^d)^n\to[0,\8)$ involved in Subsection \ref{section2.2}  is undetermined. From now on, we shall choose
$$\rho({\bf x})= \|{\bf x}\|_{1}:=\frac{1}{n}\sum_{j=1}^n|x^j|,\quad {\bf x}\in\R^n $$
so,
for the setting $d=1$,
$
\Pi_{\vv}({\bf x}):= \Pi_{\vv,1}({\bf x})
=1-2h_\vv( \|{\bf x}\|_{1}),
  {\bf x}\in \R^n.
$
With the previous function $\rho(\cdot)$ at hand, the issue on
the uniform-in-time conditional PoC for the $1$-dimensional McKean-Vlasov SDE \eqref{EW0} can be treated  via the asymptotic coupling by reflection.

\begin{proposition}\label{pro3}
Assume  that  $({\bf H}_1)$-$({\bf H}_3)$
hold and
suppose that
 \begin{align}\label{la0}
\lambda_0:=\lambda^*-\lambda_3\e^{\Lambda_1}>0 \quad \mbox{ with } \quad \lambda^*:=\min\{\lambda_1\e^{-\Lambda_2},\lambda_2\e^{-\Lambda_1}\},
\end{align}
  where $\lambda_1,\lambda_2,\lambda_3>0$ are introduced in $({\bf H}_1)$,
and
\begin{equation*}
\Lambda_1:= \lambda_1 \int_0^{2\ell_0}\frac{r}{  F_{\si,\si_0}(r)}\d r, \quad    \quad \Lambda_2:= \lambda_1 \int_0^{\ell_0}\frac{r}{  F_{\si,\si_0}(r)}\d r
\end{equation*}
with  the function $F_{\si,\si_0}$ being given in $({\bf H}_3)$.
Then,
there exists a  constant $C_0>0$
$($which is independent of $n\ge1$$)$ such that for any $t\ge0,$
\begin{equation}\label{PoC}
 \E \|{\bf Z}_t^{n,\vv}\|_1 \le  C_0\e^{-\lambda_0 t}  \E \|{\bf Z}_0^{n,\vv}\|_1+C_0\Big(\frac{1}{n}\big(1+\E |X_0^1|\big)+\varphi(n)+   \vv \Big),
\end{equation}
where ${\bf Z}_t^{n,\vv}:=(Z_t^{1,n,\vv},\cdots,Z_t^{n,n,\vv})$  with $Z_t^{i,n,\vv}:=X_t^i-X_t^{i,n,\vv}$,
 and $\varphi(\cdot)$ is given in $({\bf H}_2)$.
\end{proposition}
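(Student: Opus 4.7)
The plan is to apply It\^o's formula to a carefully constructed concave distance function $f$ evaluated at the coupled differences $Z_t^{i,n,\vv}=X_t^i-X_t^{i,n,\vv}$ produced by the asymptotic reflection coupling of Section~\ref{section2.2}, and to balance the contractive effect coming from the reflection of the small-jump part of both the idiosyncratic and common noises against the dissipativity in $(\mathbf{H}_1)$ and the mean-field perturbation controlled by $(\mathbf{H}_2)$. The construction of $f$ is an Eberle-type one adapted to jump noise: pick a strictly increasing, $C^2$ and concave function on $[0,2\ell_0]$, extended linearly beyond $2\ell_0$, of the form $f(r)=\int_0^{r}\phi(s)\mathrm{e}^{-g_*(s)}\,\d s$ with $\phi$ non-increasing on $[0,\ell_0]$ and tuned on $[\ell_0,2\ell_0]$ so that $f\asymp \mathrm{id}$, and so that the Eberle-type ODI
\[
F_{\si,\si_0}(r)f''(r)+\lambda_1 r f'(r)\I_{\{r\le 2\ell_0\}}-\lambda_2 r f'(r)\I_{\{r>\ell_0\}}\le -\lambda^* f(r),\qquad r>0,
\]
holds with the sharp rate $\lambda^*=\min\{\lambda_1\mathrm{e}^{-\Lambda_2},\lambda_2\mathrm{e}^{-\Lambda_1}\}$. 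The sign conditions on $g_*'',g_*^{(3)},g_*^{(4)}$ in $(\mathbf{H}_3)$ are exactly the compatibility conditions that make this $f$ concave on $[0,2\ell_0]$.

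Next I would apply It\^o's formula to $V_t:=\frac{1}{n}\sum_{i=1}^n f(|Z_t^{i,n,\vv}|)$. The drift contribution is controlled by $(\mathbf{H}_1)$: the spatial part yields at most $\lambda_1|Z|f'(|Z|)\I_{\{|Z|\le\ell_0\}}-\lambda_2|Z|f'(|Z|)\I_{\{|Z|>\ell_0\}}$ (up to the local perturbation on $[0,\ell_0]$ which is absorbed into the ODI above), while the Lipschitz-in-measure estimate \eqref{H2} contributes an extra term $\lambda_3 f'(|Z_t^{i,n,\vv}|)\,\W_1(\mu_t^i,\hat\mu_t^{n,\vv})$. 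For the small-jump part, using the rotational invariance of $\nu$ and $\nu^0$ together with a second-order Taylor expansion of $f(|Z+\sigma\Pi_\vv z|)$ and the concavity of $f$, the reflection produces a non-positive term whose dominant contribution is bounded above by $F_{\si,\si_0}(|Z|)f''(|Z|)$ in view of \eqref{EEE}. The large-jump (synchronous) pieces contribute zero in expectation after compensation, and the only price paid is a $O(\vv)$ error coming from the transition zone $\{h_\vv\in(0,1)\}$ of width $\vv$, where the reflection is only partial.

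The mean-field term is treated by the triangle inequality
\[
\W_1(\mu_t^i,\hat\mu_t^{n,\vv})\le \W_1(\mu_t^i,\tilde\mu_t^{n,-i})+\W_1(\tilde\mu_t^{n,-i},\hat\mu_t^{n})+\W_1(\hat\mu_t^{n},\hat\mu_t^{n,\vv}),
\]
where $(\mathbf{H}_2)$ bounds the first summand in expectation by $\varphi(n)$, the second is of order $n^{-1}(1+\E|X_t^1|)$ (controlled uniformly in $t$ by Lemma~\ref{lem1}), and the third is dominated pointwise by $\|\mathbf{Z}_t^{n,\vv}\|_1$. Combining the ODI on $f$ with these estimates and the two-sided equivalence $f(r)\asymp r$ and $f'\asymp 1$ (which follows from the construction), one arrives at
\[
\frac{\d}{\d t}\E V_t\le -\lambda^*\E V_t+\lambda_3\mathrm{e}^{\Lambda_1}\E V_t+C\bigl(\varphi(n)+n^{-1}(1+\E|X_0^1|)+\vv\bigr).
\]
Gr\"onwall's inequality together with $\lambda_0=\lambda^*-\lambda_3\mathrm{e}^{\Lambda_1}>0$ and $f\asymp\mathrm{id}$ then yields \eqref{PoC}.

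The hardest step is the construction and verification of $f$: one needs an explicit formula that simultaneously (i) produces the sharp rate $\lambda^*$ by balancing drift and jump effects on the non-dissipative interval $[0,\ell_0]$, (ii) stays concave with $f\asymp \mathrm{id}$ so that the reflection-coupling jump term has the right sign and the weak contraction translates back to $\|\mathbf{Z}_t^{n,\vv}\|_1$, and (iii) exploits the precise sign pattern on $g_*'',g_*^{(3)},g_*^{(4)}$ given by $(\mathbf{H}_3)$ to absorb the second-order Taylor remainders arising from the reflection of both $\nu$ and $\nu^0$. Verifying that the rotational invariance and the threshold $|z|<|Z_t^{i,n,\vv}|/(2|\sigma|)$ appearing in \eqref{EW3} together reproduce the lower bound $F_{\si,\si_0}$ on the effective second-order coefficient is the most delicate single calculation.
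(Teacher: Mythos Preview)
Your strategy matches the paper's proof. Two small points: (i) the paper takes the simpler choice $\phi\equiv 1$, i.e.\ $f(r)=\int_0^r \e^{-g_*(s)}\,\d s$ on $[0,2\ell_0]$ extended linearly, and shows directly that $\psi(r):=f'(r)\big((\lambda_1+\lambda_2)r\I_{\{r\le\ell_0\}}-\lambda_2 r\big)+2f''(r)F_{\si,\si_0}(r)\I_{\{r\le\ell_0\}}\le -\lambda^* r$, so no tuning of $\phi$ on $[\ell_0,2\ell_0]$ is needed; (ii) Assumption $(\mathbf{H}_2)$ controls $\E|b(X_t^i,\mu_t^i)-b(X_t^i,\tilde\mu_t^{n,-i})|$, not $\E\W_1(\mu_t^i,\tilde\mu_t^{n,-i})$, so the mean-field error must be split at the level of $b$ (as in \eqref{E18}) rather than at the Wasserstein level---only the pieces $\tilde\mu_t^{n,-i}\to\tilde\mu_t^n\to\hat\mu_t^{n,\vv}$ are handled via \eqref{H2} and the pointwise bound $\W_1(\tilde\mu_t^n,\hat\mu_t^{n,\vv})\le\|{\bf Z}_t^{n,\vv}\|_1$.
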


\begin{proof}
Below, we split the proof into three parts since the detailed  proof is a little bit lengthy, and fix  $1\le i\le n$.

{\it$(i)$  Stochastic differential inequality solved by the radial process}.
 Notice from   \eqref{EW3}   that
\begin{align*}
\d Z_t^{i,n,\vv}=&\big(b(X_t^i,\mu_t^i) -b(X_t^{i,n,\vv},\hat\mu_t^{n,\vv})\big)\,\d t
+2\si\int_{\{|z|<\frac{1}{2|\si|}|Z_t^{i,n,\vv}|\}}h_\vv\big(\|{\bf Z}_t^{n,\vv}\|_1\big)z\,\ol{N}^i(\d t,\d z)\\
&+2\si_0\int_{\{|z|<\frac{1}{2|\si_0|}|Z_t^{i,n,\vv}|\}}h_\vv\big(\|{\bf Z}_t^{n,\vv}\|_1\big)z\,\ol{N}^0(\d t,\d z)
\end{align*}
and that for any $a,x,z\in\R,$
\begin{align*}
|x+az\I_{\{|z|\le
1\}}|+|x+az\I_{\{|z|>1  \}}|-2|x|=|x+az|-2|x|.
\end{align*}
Thus,  applying It\^o's formula yields  that
\begin{equation}\label{E17}
\begin{split}
\d |Z_t^{i,n,\vv}|&=\frac{Z_t^{i,n,\vv}}{|Z_t^{i,n,\vv}|}\big(b(X_t^i,\mu_t^i) -b(X_t^{i,n,\vv},\hat\mu_t^{n,\vv})\big)\I_{\{|Z_t^{i,n,\vv}|\neq0\}}\,\d t \\
&\quad+\int_{\{|z|<\frac{1}{2|\si|}|Z_t^{i,n,\vv}|\}} \Lambda^{i, \vv}({\bf Z}_t^{n,\vv},\si,z) \,\nu(\d z)\d t\\
&\quad+\int_{\{|z|<\frac{1}{2|\si_0|}|Z_t^{i,n,\vv}|\}}\Lambda^{i, \vv}({\bf Z}_t^{n,\vv},\si_0,z)\,\nu^0(\d z)\d t+\d M_t^{i,n,\vv}\\
&=:\frac{Z_t^{i,n,\vv}}{|Z_t^{i,n,\vv}|}\big(b(X_t^i,\mu_t^i) -b(X_t^{i,n,\vv},\hat\mu_t^{n,\vv})\big)\I_{\{|Z_t^{i,n,\vv}|\neq0\}}\,\d t+\big(\phi_t^{i,n,\vv} + \bar\phi_t^{i,n,\vv}\big)\,\d t+ \d M_t^{i,n,\vv},
\end{split}
\end{equation}
where  for ${\bf x}\in\R^n$ and $u,z\in\R$,
\begin{align*}
\Lambda^{i, \vv}({\bf x},u,z):=\big|x^i+2u h_\vv(\|{\bf x} \|_1)z\big|-|x^i| -\frac{x^i}{|x^i|}2u h_\vv(\|{\bf x}  \|_1)z\I_{\{|z|\le1\}}\I_{\{|x^i|\neq0\}}
\end{align*}
and
\begin{align*}
\d M_t^{i,n,\vv}
:=&\int_{\{|z|<\frac{1}{2|\si|}|Z_t^{i,n,\vv}|\}}\big(\big|Z_t^{i,n,\vv}+2\si h_\vv\big(\|{\bf Z}_t^{n,\vv}\|_1\big)z\big|
    -|Z_t^{i,n,\vv}|\big)\,\widetilde{N}^i(\d t,\d z)\\
&+\int_{\{|z|<\frac{1}{2|\si_0|}|Z_t^{i,n,\vv}|\}}\big(\big|Z_t^{i,n,\vv}+2\si_0 h_\vv\big(\|{\bf Z}_t^{n,\vv}\|_1\big)z\big|
    -|Z_t^{i,n,\vv}|\big)\,\widetilde{N}^0(\d t,\d z).
\end{align*}

In the sequel, we write
\begin{align*}
\tilde\mu_t^n =\frac{1}{n}\sum_{j=1}^n\delta_{X_t^j} \quad \mbox{ and } \quad   \tilde{\mu}_t^{n,-i} =\frac{1}{n-1}\sum_{j=1:j\neq i}^n\delta_{X_t^j}.
\end{align*}
Trivially, we have
\begin{align*}
\tilde\mu_t^n =\frac{1}{n}\sum_{j=1}^n\delta_{X_t^j}=\frac{n-1}{n}\tilde{\mu}_t^{n,-i}+\frac{1}{n}\delta_{X_t^i}.
\end{align*}
Subsequently, the following fact (see e.g. \cite[(3.16)]{BW}) that for $\mu\in\mathscr P_1(\R )$ and $x\in\R $,
\begin{align*}
\mathbb W_1\Big(\frac{n-1}{n}\mu+\frac{1}{n}\delta_x,\mu\Big)\le \frac{1}{n}\big(|x|+\mu(|\cdot|)\big)
\end{align*}
enables us to derive that
\begin{align*}
\mathbb W_1(\tilde\mu_t^n,\tilde{\mu}_t^{n,-i})\le \frac{1}{n}\big(|X_t^i|+ \tilde{\mu}_t^{n,-i}(|\cdot|)\big).
\end{align*}
Next,  by means of $({\bf H}_1)$ and $({\bf H}_2)$, along with the triangle inequality,
it holds that
\begin{equation}\label{E18}
\begin{split}
 &\frac{Z_t^{i,n,\vv}}{|Z_t^{i,n,\vv}|}\big(b(X_t^i,\mu_t^i)-b(X_t^{i,n,\vv},\hat\mu_t^{n,\vv})\big)\I_{\{Z_t^{i,n,\vv}\neq0\}}\\
 &\le \frac{Z_t^{i,n,\vv}}{|Z_t^{i,n,\vv}|}\big(b(X_t^{i},\tilde\mu_t^n)-b(X_t^{i,n,\vv},\hat\mu_t^{n,\vv})\big)\I_{\{Z_t^{i,n,\vv}\neq0\}}\\
 &\quad +|b(X_t^i,\tilde\mu_t^{n,-i})-b(X_t^{i},\tilde\mu_t^n)|+|b(X_t^{i},\mu_t^i)-b(X_t^{i},\tilde\mu_t^{n,-i})|\\
 &\le (\lambda_1+\lambda_2)|Z_t^{i,n,\vv}|\I_{\{|Z_t^{i,n,\vv}|\le\ell_0\}}-\lambda_2|Z_t^{i,n,\vv}|+ \lambda_3 \tilde\mu_t^n(|\cdot|) +J_i({\bf X}_t^n),
 \end{split}
\end{equation}
where
$$J_i({\bf X}_t^n):=\frac{\lambda_3}{n}\big(|X_t^i|+ \tilde{\mu}_t^{n,-i}(|\cdot|)\big)+ \big|b(X_t^i,\mu_t^i)-b(X_t^i,\tilde{\mu}_t^{n,-i})\big|.$$

As we know, the utmost importance  is that the quadratic variation process of the
associated radial process vanishes when the (asymptotic) coupling by reflection is applied to SDEs driven by Brownian motion.
Analogously to the aforementioned fact,  it is extremely important to necessitate $\phi_t^{i,n,\vv}=\bar\phi_t^{i,n,\vv}=0$, where the terms $\phi_t^{i,n,\vv}, \bar\phi_t^{i,n,\vv}$ play the similar role as the quadratic variation process corresponding to the Brownian motion case. For ${\bf x}\in\R^n$ and $0\neq u\in\R$, in case of
$|z|< |x^i|/{2|u|}$, it follows from
$h_\vv\in[0,1] $ that
\begin{align}\label{ET11}
x^i+2u h_\vv\big(\|{\bf x} \|_1\big)z\ge x^i-2 |u|\cdot|z|\ge0\quad\quad \mbox{ if } \quad x^i\ge0,
\end{align}
and
\begin{align}\label{ET12}
x^i+2\si h_\vv\big(\|{\bf x} \|_1\big)z\le x^i+2|u| \cdot|z|\le0\quad\quad \mbox{ if } \quad x^i<0.
\end{align}
So, we arrive at $\phi_t^{i,n,\vv}=0$ and $ \bar\phi_t^{i,n,\vv}=0$ in case of  $ |Z_t^{i,n,\vv}|/{2|\si|}\le 1$ and $ |Z_t^{i,n,\vv}|/{2|\si_0|}\le 1$, respectively. On the other hand,  via the rotationally invariant property of $\nu(\d z)$, for $x\in\R$ and $0\neq u\in\R,$
\begin{align*}
\int_{\{|z|<\frac{1}{2|u|}|x |\}}z\I_{\{|z|>1\}}\nu(\d z)=0\quad \mbox{ if } \quad |x |/{2|u|}> 1.
\end{align*}
 Whence, we also have $\phi_t^{i,n,\vv}=0$ and $\bar \phi_t^{i,n,\vv}=0$ once  $ |Z_t^{i,n,\vv}|/{2|\si|}> 1$ and $ |Z_t^{i,n,\vv}|/{2|\si_0|}> 1$, separately. So,   $\phi_t^{i,n,\vv}=\bar \phi_t^{i,n,\vv}=0$ is available. Based on the preceding analysis,
we derive that
\begin{equation}\label{ET10}
\begin{split}
\d |Z_t^{i,n,\vv}|
&\le \big((\lambda_1+\lambda_2)|Z_t^{i,n,\vv}|\I_{\{|Z_t^{i,n,\vv}|\le\ell_0\}}-\lambda_2|Z_t^{i,n,\vv}|\big)\I_{\{|Z_t^{i,n,\vv}|\neq0\}}\,\d t\\
&\quad +\big(\lambda_3 \tilde\mu_t^n(|\cdot|) +J_i({\bf X}_t^n)\big)\,\d t+\d M_t^{i,n,\vv}.
\end{split}
\end{equation}

{\it $(ii)$  Stochastic differential inequality solved by the composition of the radial process and the distance function}.
Define the following function:
\begin{equation}\label{f}
f(r) =\begin{cases}
             \int_0^{r}\e^{-g_*(s)}\,\d s, &\quad r\in [0,2\ell_0], \\
             f(2\ell_0)+f'(2\ell_0)(r-2\ell_0), & \quad r\in [2\ell_0,\infty),\end{cases}
\end{equation}
where
$$g_*(r)
=  \lambda_1 \int_0^r\frac{s}{F_{\si,\si_0}(s)}\,\d s,\quad r\in[0,2\ell_0]$$
and
 $F_{\si,\si_0}(\cdot)$  are given in $({\bf H}_3)$.
Applying It\^o's formula and taking   \eqref{ET10}, \eqref{ET11} as well as \eqref{ET12} into consideration gives that for  $\lambda_0$ given in \eqref{la0},
\begin{align*}
&\d \big(\e^{\lambda_0 t}f(|Z_t^{i,n,\vv}|)\big)\\
&\le\d\bar M_t^{i,n,\vv}+\e^{\lambda_0 t}\Big(\lambda_0 f(|Z_t^{i,n,\vv}|) +f'(|Z_t^{i,n,\vv}|)\big((\lambda_1+\lambda_2)|Z_t^{i,n,\vv}|\I_{\{|Z_t^{i,n,\vv}|\le\ell_0\}}-\lambda_2|Z_t^{i,n,\vv}|\big) \\
&\qquad\qquad\qquad\qquad\,\, +f'(|Z_t^{i,n,\vv}|)\big(\lambda_3 \tilde\mu_t^n(|\cdot|) +J_i({\bf X}_t^n)\big)\Big)\,\d t\\
&\quad+\e^{\lambda_0 t} \int_{ \{|z|<\frac{1}{2|\si|}|Z_t^{i,n,\vv}| \}}\Big(f\big(\big|Z_t^{i,n,\vv}+2\si h_\vv\big(\|{\bf Z}_t^{n,\vv}\|_1\big)z\big|\big)-f(|Z_t^{i,n,\vv}|)\\
&\qquad\qquad\qquad\qquad\qquad\quad\,\,  -2\si\frac{Z_t^{i,n,\vv}}{|Z_t^{i,n,\vv}|}f'(|Z_t^{i,n,\vv}|) h_\vv\big(\|{\bf Z}_t^{n,\vv}\|_1\big)z\I_{\{|z|\le1\}}\I_{\{|Z_t^{i,n,\vv}|\neq0\}}\Big)\,\nu(\d z)\d t\\
&\quad  +\e^{\lambda_0 t}\int_{ \{|z|<\frac{1}{2|\si_0|}|Z_t^{i,n,\vv}| \}}\Big(f\big(\big|Z_t^{i,n,\vv}+2\si_0 h_\vv\big(\|{\bf Z}_t^{n,\vv}\|_1\big)z\big|\big)-f(|Z_t^{i,n,\vv}|)\\
&\qquad\qquad\qquad\qquad\qquad\quad\,\,  -2\si_0\frac{Z_t^{i,n,\vv}}{|Z_t^{i,n,\vv}|}f'(|Z_t^{i,n,\vv}|) h_\vv\big(\|{\bf Z}_t^{n,\vv}\|_1\big)z\I_{\{|z|\le1\}}\I_{\{|Z_t^{i,n,\vv}|\neq0\}}\Big)\,\nu^0(\d z)\d t,
\end{align*}
where $(\bar M_t^{i,n,\vv})_{t\ge0}$ is a martingale. By virtue of the rotational invariance of $\nu(\d z)$ and the odd property of the mapping $z\mapsto z\I_{\{|z|\le1\}}$, it follows that
\begin{align*}
 \d \big(\e^{\lambda_0 t}f(|Z_t^{i,n,\vv}|)\big)
&\le\e^{\lambda_0 t}\Big(\lambda_0 f(|Z_t^{i,n,\vv}|) +f'(|Z_t^{i,n,\vv}|)\big((\lambda_1+\lambda_2)|Z_t^{i,n,\vv}|\I_{\{|Z_t^{i,n,\vv}|\le\ell_0\}}-\lambda_2|Z_t^{i,n,\vv}|\big) \\
&\qquad\quad\,\, +f'(|Z_t^{i,n,\vv}|)\big(\lambda_3 \tilde\mu_t^n(|\cdot|) +J_i({\bf X}_t^n)\big)\Big)\,\d t\\
&\quad+\frac{1}{2}\e^{\lambda_0 t} \int_{ \{|z|<\frac{1}{2|\si|}|Z_t^{i,n,\vv}| \}}\Upsilon^{i,n,\vv}(t,\si,z)\,\nu(\d z)\d t\\
&\quad  +\frac{1}{2}\e^{\lambda_0 t}\int_{ \{|z|<\frac{1}{2|\si_0|}|Z_t^{i,n,\vv}| \}}\Upsilon^{i,n,\vv}(t,\si_0,z)\,\nu^0(\d z)\d t +\d\bar M_t^{i,n,\vv},
\end{align*}
where for $u,z\in\R $ and $t\ge0,$
\begin{align*}
\Upsilon^{i,n,\vv}(t,u,z):=f\big(\big|Z_t^{i,n,\vv}+2u h_\vv(\|{\bf Z}_t^{n,\vv}\|_1)z\big|\big)+f\big(\big|Z_t^{i,n,\vv}-2u h_\vv(\|{\bf Z}_t^{n,\vv}\|_1)z\big|\big)-2f(|Z_t^{i,n,\vv}|).
\end{align*}
For ${\bf x}\in\R^n$ and  $u,z\in\R$,
note that the hypothesis that   $|z|\le \frac{1}{2|u|}|x^i|$ with  $u\neq0,$
 $x^i\ge0$ and $x^i<0$
implies respectively that
\begin{align*}
 x^i\pm2u
 h_\vv\big(\|{\bf x} \|_1\big)z  \ge0 \quad \mbox{ and } \quad x^i\pm2u
 h_\vv\big(\|{\bf x} \|_1\big)z<0.
 \end{align*}
Thereby, in case of $|z|\le \frac{1}{2|u|}|Z_t^{i,n,\vv}|$ for $u\neq0,$
$\Upsilon^{i,n,\vv}(t,u,z)$
can be rewritten as below:
\begin{align*}
\Upsilon^{i,n,\vv}(t,u,z)
&=f\big( |Z_t^{i,n,\vv}|+2|u| h_\vv(\|{\bf Z}_t^{n,\vv}\|_1)|z|  \big)+f\big( |Z_t^{i,n,\vv}|-2|u| h_\vv(\|{\bf Z}_t^{n,\vv}\|_1)|z|  \big)-2f(|Z_t^{i,n,\vv}|).
\end{align*}
Next, since   $[0,\8)\ni r\mapsto f'(r)$ is decreasing,
the mean value theorem implies that
\begin{align*}
f(r+\delta)+f(r-\delta)-2f(r)\le0, \quad 0\le \delta\le r.
\end{align*}
Correspondingly,   $\Upsilon^{i,n,\vv}(t,u,z)\le0$ provided $|z|\le  \frac{1}{2|u|}|Z_t^{i,n,\vv}|$ for $u\neq0.$ Moreover,
the fact  (see e.g.  \cite[Lemma 4.1]{LMW}) that
\begin{align*}
f(r+\delta)+f(r-\delta)-2f(r)\le f''(r)\delta^2,\quad 0\le \delta\le r\le\ell_0
\end{align*}
(also owing  to $g''_*(r)\le0$, $g^{(3)}_*(r)\ge0$ and $g^{(4)}_*(r)\le0$ for $r\in(0,2\ell_0]$),
and  the hypothesis that $|Z_t^{i,n,\vv}|\le\ell_0$ and  $|z|\le  \frac{1}{2|u|}|Z_t^{i,n,\vv}|$ for $u\neq0,$
imply  that
\begin{align*}
\Upsilon^{i,n,\vv}(t,u,z)\le4 f''(|Z_t^{i,n,\vv}|) |u|^2 h_\vv(\|{\bf Z}_t^{n,\vv}\|_1)^2|z|^2.
\end{align*}
As a consequence, due to  $f''(r)<0, r\le \ell_0,$
we deduce
from \eqref{EEE} that
 that
\begin{align*}
 \d \big(\e^{\lambda_0 t}f(|Z_t^{i,n,\vv}|)\big)
&\le\d\bar M_t^{i,n,\vv} +\e^{\lambda_0 t}\big(\lambda_0 f(|Z_t^{i,n,\vv}|) +\psi(|Z_t^{i,n,\vv}|)  \big) \,\d t\\
&\quad +\e^{\lambda_0 t}f'(|Z_t^{i,n,\vv}|)\big(\lambda_3 \tilde\mu_t^n(|\cdot|) +J_i({\bf X}_t^n)\big)\,\d t+  \e^{\lambda_0 t}   \varphi^{\vv,i}( {\bf Z}_t^{n,\vv} )  \d t,
\end{align*}
where for any $r\ge0, $
\begin{align*}
\psi(r):=f'(r)\big((\lambda_1+\lambda_2)r\I_{\{r\le\ell_0\}}-\lambda_2r\big)+2 f''(r) F_{\si,\si_0}(r)  \I_{\{r\le\ell_0\}}
\end{align*}
and
\begin{align*}
\varphi^{\vv,i}({\bf x}):= 2   f''(|x^i|)  (h_\vv(\|{\bf x} \|_1)^2-1)  F_{\si,\si_0}(|x^i|)\I_{\{|x^i|\le\ell_0\}}.
\end{align*}

{\it $(iii)$  Establishment of \eqref{PoC}}.
Owing to $g'_*(r) =\frac{ \lambda_1r}{ F_{\si,\si_0}(r)} $ for all $r\in(0,2\ell_0],$   it is easy to see that
\begin{align*}
 \psi(r)=
-\lambda_1r   \e^{-g_*(r)} ,\quad    r\le \ell_0 \quad\mbox{ and } \quad   \psi(r)=
-\lambda_2f'(r)r, \quad  r>\ell_0.
\end{align*}
Whence, we arrive at
\begin{align}\label{ET13}
 \psi(r)\le - \lambda^*r,\quad r\ge0.
\end{align}
Additionally, by invoking $({\bf H}_2)$ and Lemma \ref{lem1}, there exists a constant $c_0>0$ such that
\begin{align*}
\lambda_3  \E\tilde\mu_t^n(|\cdot|) +\frac{1}{n}\sum_{i=1}^n\E J_i({\bf X}_t^n)&\le  \lambda_3 \E \|{\bf Z}_t^{n,\vv}\|_1+\frac{\lambda_3}{n^2}\sum_{i=1}^n\Big( \E|X_t^i|+\frac{1}{n-1}\sum_{j=1:j\neq i}^n\E|X_t^j| \Big)+ \varphi(n)\\
&\le  \lambda_3\E \|{\bf Z}_t^{n,\vv}\|_1+\frac{c_0}{n}(1+\E|X_0^1| ) + \varphi(n).
\end{align*}
This, besides $f'\le1 $, $f'(2\ell_0)r\le f(r) $ as well as  \eqref{ET13}, yields that
\begin{equation}\label{ET14}
\begin{split}
\frac{1}{n}\sum_{i=1}^n\E f(|Z_t^{i,n,\vv}|)&\le \frac{\e^{-\lambda_0 t}}{n}\sum_{i=1}^n\E f(|Z_0^{i,n,\vv}|) + \frac{c_0}{n\lambda_0}(1+\E|X_0^1| )  + \frac{\varphi(n)}{\lambda_0}\\
&\quad + \frac{1}{n}\sum_{i=1}^n\int_0^t\e^{-\lambda_0(t-s)}\varphi^{\vv,i}( {\bf Z}_s^{n,\vv} )\,\d s.
\end{split}
\end{equation}
Furthermore, by means of $f''(r)=-g'_*(r)\e^{-g_*(r)}$ and  $g'_*(r) =\frac{ \lambda_1r}{ F_{\si,\si_0}(r)} $ for $r\in[0,\ell_0],$   we obtain from $h_\vv\in[0,1] $ that
\begin{equation}\label{ET15}
\begin{split}
 \frac{1}{n} \sum_{i=1}^n\varphi^{\vv,i}({\bf x})&= 2  \big(1-h_\vv(\|{\bf x} \|_1)^2 \big) \frac{1}{n}\sum_{i=1}^n  g'_*(|x^i|)  \e^{-g_*(|x^i|)} F_{\si,\si_0}(|x^i|)\I_{\{|x^i|\le\ell_0\}}\\
  &=2 \lambda_1 \big(1-h_\vv(\|{\bf x} \|_1)^2 \big) \frac{1}{n}\sum_{i=1}^n \e^{-g_*(|x^i|)}|x^i|\\
  &\le 4 \lambda_1 \big(1-h_\vv(\|{\bf x} \|_1)  \big)\|{\bf x}\|_1\\
  &\le 8\lambda_1\vv,
\end{split}
\end{equation}
where in the last display we used the fact that $(1-h_\vv(r))r\le 2\vv$  for all $r\ge0.$ At length, the assertion \eqref{PoC} follows from \eqref{ET14}, \eqref{ET15}, as well as  $f'(2\ell_0)r\le f(r)\le r, r\ge0. $
\end{proof}

Before we proceed, we make an  additional  comment.
\begin{remark}\label{remark}
 Note that \eqref{E17} is still valid for the high dimensional case (i.e., $d\ge2$).  Nevertheless, for this setting, it is a tough task to
 verify  $\phi_t^{i,n,\vv}=\bar \phi_t^{i,n,\vv}=0$, which plays a crucial role in establishing \eqref{PoC}. Therefore, in the present work, we focus merely on the $1$-dimensional case.
\end{remark}

In the sequel, we provide an illustrative example on $g_*(\cdot)$ given in $({\bf H}_3)$.  

\begin{example}\label{exa}
Let $\nu (\d z)=\frac{c_*}{|z|^{1+\alpha}}$ and $\nu^0(\d z)=\frac{c^*}{|z|^{1+\beta}}$ for some constants $c_*,c^*>0$ and $\alpha,\beta\in(1,2)$.
By virtue of $\alpha,\beta\in(1,2)$, it is ready to see that \eqref{vv0} is fulfilled. A direct calculation shows that
 for any $r\ge0,$
\begin{align*}
 2c_*\si^2\int_{\{0\le z <\frac{r}{2|\si|} \}}z^{1-\alpha} \d z +2c^*\si^2_0\int_{\{0\le  z <\frac{r}{2|\si_0|} \}}z^{1-\beta} \d z =\frac{2^{\alpha-1}c_*|\si|^\alpha r^{2-\alpha}}{2-\alpha} +\frac{2^{\beta-1}c^*|\si_0|^\beta r^{2-\beta}}{2-\beta}.
 \end{align*}
Note that for
fixed
$\theta\in(0,1)$,
\begin{align*}
 |\si|^\alpha r^{2-\alpha}  + |\si_0|^\beta r^{2-\beta}\ge C_\theta ( |\si|^\alpha +|\si_0|^\beta   )r^{2- \theta },\quad r\in[0,2\ell_0],
\end{align*}
where
$
C_\theta:=(2\ell_0)^{ \theta-\alpha}\wedge (2\ell_0)^{ \theta-\beta}
$ for $\ell_0\ge1.$
Below, we take
\begin{align*}
F_{\si,\si_0}(r)=C_1 ( |\si|^\alpha +|\si_0|^\beta   )r^{2- \theta },\quad r\in[0,2\ell_0],
\end{align*}
where $
C_1:=C_\theta\big(\frac{2^{\alpha-1}c_*  }{2-\alpha}\wedge\frac{2^{\beta-1}c^*   }{2-\beta}\big).
$
Subsequently, we have
\begin{align}\label{EE-}
  g_*(r)=\frac{\lambda_1r^\theta}{C_1\theta(|\si|^\alpha+|\si_0|^\beta)},\quad r\in[0,2\ell_0].
\end{align}
Due to $\theta\in(0,1)$, it is easy to see  that $g_*'(r)>0$, $g_*''(r)<0$, $g_*'''(r)>0$ as well as $g_*^{(4)}(r)<0$ for all $r\in(0,2\ell_0].$ Additionally,
 we notice from \eqref{EE-} that   $(|\si|,|\si_0|)\mapsto \Lambda_1=\Lambda_1(|\si|,|\si_0|)$ and  $(|\si|,|\si_0|)\mapsto \Lambda_2=\Lambda_2(|\si|,|\si_0|)$
 are decreasing in two respective variables. So, the bigger intensity of the independent noise and the common noise can enhance the associated convergence rate.
\end{example}

With all the preparations above at hand, we move on to conduct the proof of Theorem   \ref{thm1}.
\begin{proof}[Proof of Theorem $\ref{thm1}$]
In retrospect,
  $((X_t^{i})_{t>0})_{1\le i\le n}$ and  $((\bar X_t^{i})_{t>0})_{1\le i\le n}$ are governed by
  \eqref{EW1} with respective initial value $(X_0^{i})_{1\le i\le n}$ and  $(\bar X_0^{i})_{1\le i\le n}$, and
 $((X_t^{i,n})_{t>0})_{1\le i\le n}$  is the solution to \eqref{EW2} with the   initial value $(\bar X_0^{i})_{1\le i\le n}$.

For $\Gamma\in \mathscr C(\mathscr L_{\mu_0},\mathscr L_{\bar\mu_0})$, there exists a  measure-valued random variable $(m_0,\bar m_0)$ such that $\mathscr L_{(m_0,\bar m_0)}=\Gamma$ so $\mathscr L_{m_0}=\mathscr L_{\mu_0}$ and $\mathscr L_{\bar m_0}=\mathscr L_{\bar\mu_0}$. Subsequently, there is a measure-valued random variable $\xi$ such that
\begin{align*}
\mathbb W_1(m_0,\bar m_0)=\int_{\R\times \R}|x-y|\xi(\d x,\d y).
\end{align*}
In the following analysis,   $(X_0^{i},\bar X_0^i)_{1\le i\le n}$ are set  to be identically distributed and  mutually  independent and satisfy $\mathscr L_{(X_0^{i},\bar X_0^i)|\mathscr F_0^0}=\xi$. Correspondingly,  we derive that
\begin{align*}
\E|X_0^{i}-\bar X_0^{i}|=\E\big(\E\big(|X_0^{i}-\bar X_0^{i}|\big|\mathscr F_0^0\big)\big)&=\E\bigg(\int_{\R\times \R}|x-y|\mathscr L_{(X_0^{i},\bar X_0^{i})|\mathscr F_0^0}(\d x,\d y)\bigg)\\
&=\E\mathbb W_1(m_0,\bar m_0)=\int_{\mathscr P(\R)\times\mathscr P(\R)}\mathbb W_1(\mu,\nu)\Gamma(\d \mu,\d \nu).
\end{align*}
 Whence, we arrive at
\begin{align}\label{EY}
\E|X_0^{i}-\bar X_0^{i}| =\mathcal W_1(\mathscr L_{\mu_0},\mathscr L_{\bar \mu_0}),
\quad i=1,\cdots,n.
\end{align}

Note from Proposition \ref{pro4} that for any given $T>0 $ and all $i=1,\cdots, n,$
\begin{align*}
\P^0\big(\mu_t^i=\mu_t\mbox{ for all } t\in[0,T]\big)=1.
\end{align*}
Then, by invoking  the triangle inequality, it is easy to see that for all $t>0$ and $i=1,\cdots,n,$
\begin{equation}\label{e:ppff1}\begin{split}
 \mathcal W_1\big(\mathscr L_{\mu_t},\mathscr L_{\bar{\mu}_t}\big)&=\mathcal W_1\big(\mathscr L_{\mu_t^i},\mathscr L_{\bar{\mu}_t^i}\big)\\
&\le \E^0\mathbb W_1(\mu_t^i,\bar{\mu}_t^i)\\
&\le \E^0 \big(\E^1\mathbb W_1(\mu_t^i,\tilde\mu_t^{n })\big)+\E^0\big(\E^1\mathbb W_1(\tilde\mu_t^{n },\hat\mu_t^{n})\big)\\
&\quad+\E^0\big(\E^1\mathbb W_1(\hat\mu_t^{n},\bar\mu_t^{n})\big) +\E^0\big(\E^1\mathbb W_1(\bar{\mu}_t^i,\bar\mu_t^{n} )\big)\\
&= \E \mathbb W_1(\mu_t^i,\tilde\mu_t^{n })+\E\mathbb W_1(\tilde\mu_t^{n },\hat\mu_t^{n}) +\E\mathbb W_1(\hat\mu_t^{n},\bar\mu_t^{n}) +\E\mathbb W_1(\bar{\mu}_t^i,\bar\mu_t^{n} )\\
&=:\Gamma_1(t,n)+\Gamma_2(t,n)+\Gamma_3(t,n)+\Gamma_4(t,n),
\end{split}\end{equation}
where
\begin{align*}
\tilde\mu_t^{n}: =\frac{1}{n}\sum_{j=1}^n\delta_{X_t^{j }},\quad \bar\mu_t^{n}: =\frac{1}{n}\sum_{j=1}^n\delta_{\bar X_t^{j }}\quad \mbox{ and } \quad \hat\mu_t^{n}:=\frac{1}{n}\sum_{j=1}^n\delta_{X_t^{j,n}}.
\end{align*}

From   Proposition \ref{pro1},  we deduce  that
\begin{align*}
\lim_{n\to\8}\big(\Gamma_1(t,n)+\Gamma_4(t,n)\big)=0.
\end{align*}
Since    $(\bar X_t^{i },X_t^{i,n })_{1\le i\le n}$ are identically distributed, it follows that
\begin{align*}
\Gamma_3(t,n)\le\frac{1}{n}\sum_{j=1}^n\E|\bar X_t^{j }-X_t^{j,n }|=\E |\bar X_t^{1 }-X_t^{1,n}|.
\end{align*}
Subsequently,  applying Proposition \ref{pro1} once more leads to
$
\lim_{n\to\8}\Gamma_3(t,n)=0.
$
By Fatou's lemma, we have
\begin{align*}
 \E \mathbb W_1(\tilde\mu_t^{n},\hat\mu_t^{n})&\le \frac{1}{n}\sum_{j=1}^n\E |X_t^{j}-X_t^{j,n}| \le \frac{1}{n}\sum_{j=1}^n\liminf_{m\to\8}\E\big(m\wedge|X_t^{j}-X_t^{j,n}|\big).
\end{align*}
Thereafter, by leveraging Proposition \ref{pro2} and Fatou's lemma, we deduce that
\begin{align*}
 \E \mathbb W_1(\tilde\mu_t^{n},\hat\mu_t^{n}) &\le \frac{1}{n}\sum_{j=1}^n\liminf_{m\to\8}\liminf_{\vv\to0}\E\big(m\wedge|X_t^{j }-X_t^{j,n,\vv}|\big)\\
 &\le \frac{1}{n}\sum_{j=1}^n \liminf_{\vv\to0}\E |X_t^{j }-X_t^{j,n,\vv}| \\
 &\le \liminf_{\vv\to0} \E \|{\bf Z}_t^{n,\vv}\|_1,
\end{align*}
where $((X_t^i)_{t\ge0},(X_t^{i,n,\vv})_{t\ge0})_{1\le i\le n}$ solves \eqref{EW3}.
Obviously, there is a constant $\lambda_3^*>0$ such that $\lambda_0$, defined in \eqref{la0}, is positive when $\lambda_3\in(0,\lambda_3^*].$
Next, an application of Proposition \ref{pro3}  yields that
\begin{align*}
\Gamma_2(t,n)
  &\le \liminf_{\vv\to0}\bigg( C_0\e^{-\lambda_0 t}  \E \|{\bf Z}_0^{n,\vv}\|_1+C_0\Big(\frac{1}{n}\big(1+\E |X_0^1|\big)+\varphi(n)+   \vv \Big)\bigg)\\
  &= C_0\e^{-\lambda_0 t} \E|X_0^{1}-\bar X_0^{1}|+C_0\Big(\frac{1}{n}\big(1+\E |X_0^1|\big)+\varphi(n) \Big).
\end{align*}
Whence, combining with \eqref{EY}, it holds that
\begin{align*}
\limsup_{n\to\8}\Gamma_2(t,n)
  \le C_0\e^{-\lambda_0 t} \mathbb W_1(\mu,\bar{\mu}).
\end{align*}

Based on the previous  estimates on   $(\Gamma_i(t,n))_{1\le i\le 4}$,    the proof of   Theorem \ref{thm1} can be done.
\end{proof}

\noindent {\bf Acknowledgements.}\,\,
The research of Jianhai Bao is supported by the National Key R\&D Program of China (2022YFA1006004) and the  NSF of China (No. 12531007).
The research of Jian Wang is supported by the National Key R\&D Program of China (2022YFA1006003) and the  NSF of China (Nos. 12225104 and 12531007).

\end{document}